\documentclass[12pt]{amsart}

\allowdisplaybreaks
\usepackage{mathpazo}
\usepackage{amsmath,amsfonts,amssymb}
\usepackage{amsrefs}
\usepackage{amsthm}
\usepackage{latexsym,amsmath,amssymb,amsfonts}
\usepackage{rotating}
\usepackage{mathrsfs}
\usepackage{xypic} \xyoption{all}
\usepackage{amscd}
\usepackage{hyperref} 
\usepackage{euscript}
\usepackage{hhline}
\usepackage{graphicx,epstopdf}
\usepackage{epsfig}
\usepackage{xcolor}
\usepackage{textcomp}
\usepackage[all,color]{xy}
\usepackage{spectralsequences}
\usepackage{sseq}

\newlength{\fighskip} \fighskip=2pt
\newlength{\figvskip} \figvskip=3pt

\usepackage{hyperref}
\newcommand*{\figbox}[2]{{
  \def\figscale{#1}
  \def\arraystretch{0.8}
  \arraycolsep=0pt
  \begin{array}{c}
    \vbox{\vskip\figscale\figvskip
      \hbox{\hskip\figscale\fighskip
        \includegraphics[scale=\figscale]{#2}}}
  \end{array}}}

\advance\textheight by \topskip
\numberwithin{equation}{section}

\newcommand{\C}{\mathbb{C}}

\newcommand{\R}{\mathbb{R}}

\newcommand{\W}{\mathcal{W}}

\DeclareMathOperator{\Sym}{Sym}

\DeclareMathOperator{\Tr}{Tr}

\newcommand{\A}{\mathcal A}

\renewcommand{\W}{\mathcal W}

\theoremstyle{plain}
\newtheorem{thm}{Theorem}[section]
\newtheorem{thm-defn}{Theorem/Definition}[section]
\newtheorem{lem}[thm]{Lemma}
\newtheorem{lem-defn}[thm]{Lemma/Definition}
\newtheorem{prop}[thm]{Proposition}
\newtheorem{cor}[thm]{Corollary}

\theoremstyle{definition}
\newtheorem{defn}[thm]{Definition}
\newtheorem{notn}[thm]{Notation}
\newtheorem{eg}[thm]{Example}

\theoremstyle{remark}
\newtheorem{rmk}[thm]{Remark}

\allowdisplaybreaks[4]  

\begin{document}
\title[Formal Deformation quantization as a Fr\'echet algebra]{Formal Deformation quantization as a Fr\'echet algebra}

\author[Li]{Qin Li}
\address{Shenzhen Institute for Quantum Science and Engineering, Southern University of Science and Technology, Shenzhen, China}
\email{liqin@sustech.edu.cn}

\subjclass[2010]{53D55 (58J20, 81T15, 81Q30)}
\keywords{Deformation quantization, Geometric quantization, Fr\'echet algebra}
\thanks{}

\begin{abstract}
We define a Fr\'echet topology on the space  $C^\infty(X)[[\hbar]]$ of formal smooth functions on a symplectic manifold $X$,  by constructing a sequence of semi-norms on it. For any star product $\star$ on $C^\infty(X)[[\hbar]]$ making it a formal deformation quantization of $X$, we will show that the quantum product $\star$ is jointly continuous, and making it a Fr\'echet algebra. We will show a quantum Weierstrass theorem which says quantum polynomials are locally dense in all formal smooth functions. We will also show that the canonical trace of any formal deformation quantization is continuous under this Fr\'echet topology. 
\end{abstract}

\maketitle

\tableofcontents

\section{Introduction}

A symplectic manifold $X$ is the mathematical description of the phase space of a classical mechanical system, whose classical observable algebra is modeled by smooth functions $C^\infty(X)$. Deformation quantization of $C^\infty(X)$ models the {\em quantum} observable algebra of the corresponding quantum mechanical system. There are two types of deformation quantization of a symplectic manifold: strict and formal deformation. 

A {\em formal} deformation quantization of a symplectic (K\"ahler) manifold denotes a star product $\star$ on the space $C^\infty(X)[[\hbar]]$ which deforms the classical commutative product on smooth functions as 
\begin{equation}\label{equation: hbar-power-expansion}
f\star g=fg+\sum_{i\geq 1}\hbar^i\cdot C_i(f,g).
\end{equation}
It is required that $C_1(f,g)-C_1(g,f)=\{f,g\}$, which implies that the first order non-commutativity of $\star$ is the Poisson bracket. For general $C_i(-,-)$'s, they are usually required to be bi-differential operators, as we do in this paper. The formal variable $\hbar$ corresponds to Planck's constant in physics. From a physical point view, such a star product only works when convergence of the series in equation \eqref{equation: hbar-power-expansion} is satisfied for the evaluation of $\hbar$ at the Planck's constant.
On the other hand, the {\em strict} deformation quantization introduced by Rieffel in \cite{Rieffel1989} uses the framework of $C^*$ algebras. Instead of a formal variable, here $\hbar$ belongs to an additional  $\mathbb{R}$ parametrizing star products $\star_\hbar$ which are required to depend continuously on $\hbar$.  

It is believed that there are close relations between these two types of deformation quantizations.  For instance, the formal star product should be the asymptotic expansion of the product of $C^*$ algebras in strict quantization. Conversely, subalgebras of $C^\infty(X)$ for which the power series \eqref{equation: hbar-power-expansion} converges for small values $\hbar>0$ can recover (part of) the information of strict quantization. To understand this relation, Waldmann et al. introduced in \cite{Beiser-Romer-Waldmann, Beiser-Waldmann}  a Fr\'echet algebraic deformation quantization which is intermediate between the formal and strict quantization, and considered a Fr\'echet topology on certain subclass of functions on which the formal star product is convergent for small $\hbar>0$. See also \cite{OMMY} for the study of convergence of formal deformation quantization.

Although the convergence issue for formal deformation quantization of smooth functions and strict quantization are  important questions, it is also essential to consider quantization of formal functions $C^\infty(X)[[\hbar]]$. Here we describe an example in geometric quantization. Recall that the Hilbert spaces in the Kostant-Souriau geometric quantization of K\"ahler manifolds are given by holomorphic sections of the (tensor powers of) pre-quantum line bundles $L$:
$$
\mathcal{H}_k:=H^0_{\bar{\partial}}(X, L^{\otimes k}).
$$
For a subclass of functions with a formal variable $\hbar$, we can {\em quantize} them to holomorphic differential operators on the Hilbert space $\mathcal{H}_k$ where $\hbar$ takes the evaluation $\hbar=1/k$. An explicit example is as follows: let $f$ denote the moment map corresponding to a Hamiltonian vector field $V$ on $X$ which also preserves the complex structure on $X$, then the formal function $f+\hbar\cdot\Delta f$ acts on $\mathcal{H}_k$ for {\em all} $k>0$ via the associated Kostant-Souriau operator. Thus it is necessary to keep this formal variable $\hbar$. In the K\"ahler geometry setting, a general notion of {\em quantizable} (formal) functions is defined in \cite{ChaLeuLi2023}, which act on the Hilbert spaces as holomorphic differential operators. In this paper, we will define quantizable functions on real symplectic manifolds in a similar way which also acts on geometric quantization when there is a polarization.  These functions are natural generalization of {\em quantum polynomials} defined via a Quantum Darboux Theorem in \cite{Fed1996}.  There are the following inclusions 
$$
\textit{Quantum polynomials}\subset\textit{Quantizable functions}\subset\textit{Formal smooth functions}.
$$
 In particular, quantizable functions (on both real symplectic and K\"ahler manifolds) are polynomials in the formal variable $\hbar$, and so is the product of them, and thus are automatically convergent with respect to any evaluation of $\hbar$. 

It is natural to ask if there are enough such quantizable functions, which is one of the basic requirements for quantization of functions. For this, we first construct in this paper  a Fr\'echet topology on $C^\infty(X)[[\hbar]]$ which involves the formal variable and all smooth functions (instead of a subclass of functions). This topology is defined via a sequence of semi-norms induced by the supremum of derivatives of functions (Definition \ref{definition: Frechet-space-semi-norms}). These semi-norms are indexed by a degree involving both the order of derivatives and the polynomial degree of  $\hbar$ (see Lemma/Definition \ref{lemma-definition: semi-norms-formal-smooth-functions}). We will show that any star product on $C^\infty(X)[[\hbar]]$ is jointly continuous with respect to this Fr\'echet topology, and making it a Fr\'echet algebra. 

We will prove the {\em Quantum Weierstrass Approximation} which says that locally on $X$, quantum polynomials are dense in all formal smooth functions under the Fr\'echet topology. This implies that there are {\em enough} quantizable functions: 
\begin{thm}(Theorem \ref{theorem: quantum-Weierstrass})
For any Darboux chart $U\subset X$, the space of quantizable functions are dense in all formal smooth functions with respect to the Fr\'echet topology. 
\end{thm}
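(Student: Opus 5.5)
Since quantum polynomials are contained in quantizable functions, it suffices to show that quantum polynomials on a Darboux chart $U$ are dense in $C^\infty(U)[[\hbar]]$ for the Fr\'echet topology. We will use Fedosov's Quantum Darboux Theorem. It gives formal functions $\hat{x}^1,\dots,\hat{x}^n,\hat{y}_1,\dots,\hat{y}_n\in C^\infty(U)[[\hbar]]$ whose leading terms $x^i,y_i$ are the Darboux coordinates and which satisfy the Heisenberg relations $[\hat{x}^i,\hat{y}_j]_\star=\hbar\delta^i_j$ (up to the usual normalization). Quantum polynomials are the $\C[[\hbar]]$-span (or the $\hbar$-polynomial span) of the $\star$-monomials in these generators.

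The plan proceeds in three steps.

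\emph{Step 1 (semi-norms).} Unwind the semi-norms of Lemma/Definition \ref{lemma-definition: semi-norms-formal-smooth-functions}. A basic neighbourhood of $F=\sum_k\hbar^kF_k$ is determined by a compact $K\subset U$, an integer $N$, and $\epsilon>0$. It consists of the $G$ with $\sup_K|\partial^\alpha(F_k-G_k)|<\epsilon$ for all $k+|\alpha|\le N$. Only finitely many $\hbar$-orders $k\le N$ are involved, so approximation is a finite problem.

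\emph{Step 2 (induction on $\hbar$-order).} We approximate $F_0,\dots,F_N$ one order at a time. Suppose $P^{(m)}$ is a quantum polynomial whose $\hbar^k$ coefficients are $C^{N-k}$-close to $F_k$ on $K$ for all $k<m$. Consider the residual $R_m:=F_m-(P^{(m)})_m$, a smooth function. By the classical Weierstrass theorem (Stone--Weierstrass for derivatives, e.g.\ via convolution with a polynomial approximate identity), choose a classical polynomial $q(x,y)$ with $\|q-R_m\|_{C^{N-m}(K)}<\delta$. Let $\hat{q}$ denote the Weyl-ordered (or any fixed-order) quantum polynomial obtained by substituting $\hat{x},\hat{y}$ into $q$, and set $P^{(m+1)}=P^{(m)}+\hbar^m\hat{q}$. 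This leaves orders $<m$ unchanged, and the order-$m$ coefficient becomes $(P^{(m)})_m+q$, which is $\delta$-close to $F_m$. Higher orders are corrected at later steps, and after $N+1$ steps we are done.

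\emph{Step 3 (the obstacle).} The main obstacle is that $\hat{q}$ is not $q$: its classical part is $q$, but it carries higher-order corrections. These come from the $\hbar$-corrections in $\hat{x}^i,\hat{y}_j$ and from the bidifferential operators $C_i$ in the star products. In the scheme above these corrections land only in orders $>m$, so they are harmless, being absorbed by later residuals. One still has to check that the $C^{N-k}$ estimates at the already-fixed lower orders are untouched, which holds since $\hbar^m\hat{q}$ has no components below order $m$.

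A subtler point is that Fr\'echet closeness uses derivatives, and the residual $R_{m}$ at later stages depends on earlier polynomial choices. This is unproblematic: each residual is an honest smooth function, so classical approximation applies directly. No uniform control of the corrections is needed, only the exactness of $\hat{q}$ at leading order. Continuity of $\star$ (proved earlier) is not even required here. The only remaining check is that Fedosov's $\hat{x},\hat{y}$ exist on the whole chart $U$, which the Quantum Darboux Theorem provides after possibly shrinking to contractible $U$.
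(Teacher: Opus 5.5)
Your argument is correct, but it is organized differently from the paper's. The paper uses the strong form of the quantum Darboux theorem (Theorem \ref{theorem: quantum-Darboux}): in a quantum Darboux chart the generators are the coordinate functions themselves, with $O_{x^i}=x^i+y^i$, and $\star$ is exactly the Moyal--Weyl product (Lemma \ref{lemma: star-product-quantum-Darboux-coordinates}). Corollary \ref{corollary: quantum-classical-polynomial} then shows, by induction on polynomial degree, that every classical polynomial with $\C[\hbar]$-coefficients is a quantum polynomial. Consequently, the $\hbar$-tail of the quantization $p_0$ of a classical approximant $\tilde p_0\approx f_0$ is itself a quantum polynomial and can be subtracted off exactly. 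The general case is then just the classical Weierstrass theorem applied to each $f_m$, $m\le N+1$, plus the triangle inequality.

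You instead run a triangular induction on the $\hbar$-order. The corrections of $\hbar^m\hat q$ fall into orders $>m$, where later residuals absorb them, and your only input is $\hat q\equiv q \bmod \hbar$. This is more robust: it needs neither the exact Moyal--Weyl form nor the Corollary, and it works for any star product and any generators with the correct classical limits. The paper's route gives the sharper structural fact that, in quantum Darboux coordinates, quantum polynomials are exactly the polynomials in $x^1,\dots,x^{2n}$ with coefficients in $\C[\hbar]$, so no error ever propagates between $\hbar$-orders.

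One point to tighten. You take for granted that quantum polynomials lie in the quantizable functions, but this does not follow from the weak form of quantum Darboux you state, where the generators agree with the coordinates only to leading order. In the flat model, replacing $x^1$ by $x^1+\hbar a(x^2)$ with $a$ non-polynomial preserves all commutation relations among the generators. Yet its flat section $x^1+y^1+\hbar\sum_k \frac{1}{k!}a^{(k)}(x^2)(y^2)^k$ has infinite weight. So you should invoke the exact form $O_{x^i}=x^i+y^i$ of equation \eqref{equation: flat-section-Darboux-coordinate-functions}. With that form, the only corrections your Step 3 must handle are the Moyal terms, and your induction handles them. Also take $U$ to be a quantum Darboux chart from the outset rather than shrinking it, since shrinking would lose compact sets $K\subset U$ on which density is asserted.
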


For a formal deformation quantization $(C^\infty(X)[[\hbar]],\star)$, the most important algebraic invariant is the {\em trace} of these formal functions which is uniquely defined (see Definition \ref{definition: trace-map}).  To justify that this Fr\'echet topology on $C^\infty(X)[[\hbar]]$ is appropriate, we will show the continuity of the (renormalized) trace with respect to the Fr\'echet topology by applying the Batalin-Vilkovisky quantization method developed in \cite{GraLiLi2017}.  
\begin{thm} (Theorem \ref{theorem: continuity-trace-map})
The renormalized trace map $\widetilde{\text{Tr}}$ for a deformation quantization $(C^\infty(X)[[\hbar]],\star)$  is continuous with respect to the Fr\'echet topologies on both sides. 
\end{thm}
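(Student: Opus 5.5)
We will prove continuity of the renormalized trace by expressing it explicitly through the Batalin--Vilkovisky quantization of \cite{GraLiLi2017}. The renormalized trace map $\widetilde{\text{Tr}}\colon C^\infty(X)[[\hbar]]\to\C((\hbar))$ (or $\C[[\hbar]]$ after renormalization) takes the form
$$
\widetilde{\text{Tr}}(f)=\int_X \sigma\bigl(f\cdot e^{\hbar\iota_\Pi}\cdots\bigr)\,\frac{\omega^n}{n!},
$$
where the integrand, order by order in $\hbar$, is a finite sum of terms $\int_X D_{k,j}(f_j)\,\mathrm{vol}$. Here $f=\sum_j\hbar^j f_j$ and each $D_{k,j}$ is a linear differential operator on $X$ with smooth coefficients, of order bounded by a constant times $k-j$. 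This structure comes from the Feynman graph expansion: the trace is given by the one-loop effective action paired with the characteristic class, and each graph contributes finitely many derivatives at each $\hbar$-order. The first step is to record this locality: the coefficient of $\hbar^k$ in $\widetilde{\text{Tr}}(f)$ depends only on the $f_j$ with $j\le k$, and only through derivatives of order at most $N(k)$, where $N(k)$ grows linearly in $k$.

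The target $\C[[\hbar]]$ carries the Fréchet topology of coefficientwise convergence, with semi-norms $|\sum a_k\hbar^k|_m=\sum_{k\le m}|a_k|$. To prove continuity of a linear map between Fréchet spaces, it suffices to bound each target semi-norm by finitely many source semi-norms. For the coefficient of $\hbar^k$ we estimate
$$
\Bigl|\int_X D_{k,j}(f_j)\,\mathrm{vol}\Bigr|\le \mathrm{Vol}(X)\,C_{k,j}\,\sup_{|\alpha|\le N(k)}\sup_X|\partial^\alpha f_j|,
$$
using compactness of $X$ (or compact support in the relevant setting), a finite atlas, and sup bounds on the coefficients of $D_{k,j}$. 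By Lemma/Definition \ref{lemma-definition: semi-norms-formal-smooth-functions}, the right-hand side is dominated by the semi-norm of $f$ indexed by the combined degree (derivative order plus $\hbar$-degree) $\le N(k)+k$. Summing over $k\le m$ then gives $|\widetilde{\text{Tr}}(f)|_m\le C_m\|f\|_{M(m)}$, which is continuity.

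The main obstacle is the first step: showing that the trace is expressible at each $\hbar$-order by differential operators of controlled order with globally bounded coefficients. In the BV construction the trace is defined via integration of a Fedosov-flat section against the effective action over the Weyl bundle, and one must check two things. First, the flat section $\sigma^{-1}(f)$ at weight $k$ involves only finitely many jets of $f$; this holds because Fedosov's iteration raises the total Weyl degree. Second, the renormalization (removing the $\hbar^{-n}$ prefactor) preserves this bound. We would argue both by the grading in which $\hbar$ has weight $2$ and $y$-variables weight $1$: terms of a fixed total weight involve only jets up to bounded order. If the negative $\hbar$ powers cause trouble, we would shift indices by $n$, which is harmless for the continuity estimate.
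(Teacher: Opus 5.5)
Your overall architecture matches the paper's: locality of the trace density order by order in $\hbar$ (Proposition~\ref{proposition: integrand-locality}), followed by a sup-times-volume bound and a comparison of semi-norms. Your weaker order bound (a constant times $k-j$, with an index shift $M(m)$) is enough for continuity. The problem is that the step you call the main obstacle is asserted, not proved, and your sketch does not close it. Proposition~\ref{lemma: coefficients-flat-section} says each weight component $(O_f)_m$ depends on finitely many jets of $f$, and the BV expression respects the grading $|\hbar|=2$, $|y|=|dy|=1$. Neither fact bounds which $m$ can contribute to a fixed power of $\hbar$. The vertices $D(\gamma/\hbar)$ contain a weight $-1$ piece coming from $-\delta$ (the term $\omega_{ij}\,dx^i y^j/\hbar$). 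An unbounded number of such insertions could therefore absorb arbitrarily high weight of $O_f$ at a fixed $\hbar$-order. For the same reason, ``each graph contributes finitely many derivatives at each $\hbar$-order'' is not enough. You need that only graphs with a bounded label at the $O_f$-vertex contribute.

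The missing idea is the paper's Lemma~\ref{lemma: number-of-vertices}. Each green vertex $D(\gamma/\hbar)$ carries exactly one $1$-form on $X$ and one fermion $dy^i$, and the $O_f$-vertex carries neither. Only the $\A_X^{2n}\otimes\wedge^{2n}$ component survives $\int_{Ber}$ followed by $\int_X$, so every contributing graph has exactly $2n$ green vertices. The $\hbar$-count $l-n=(p-2n)+g+|E(\mathcal{G})|+n$ then gives $|E(\mathcal{G})|=l-p-g\le l$. Since $\sigma$ kills bosonic tails, every half-edge at the $O_f$-vertex is internal, so its valency and genus are both bounded by $l$. Hence only finitely many components of $O_f$, and so finitely many jets of $f$, enter $\alpha_{f,l}$. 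Once the number of green vertices is pinned at $2n$, your weight argument also works: total green weight is at least $-2n$, which bounds $m$ linearly in $l$. So this count is exactly what you need to add.

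Two smaller points. Your displayed formula, with $\sigma(f\cdot e^{\hbar\iota_\Pi}\cdots)$, is not the trace density of a general $f$; the relevant object is $\int_{Ber}[O_f]_\infty e^{\gamma_\infty/\hbar}$. Likewise, the ``one-loop effective action paired with the characteristic class'' describes the algebraic-index computation of $\Tr(1)$, not the trace of a general $f$, which involves graphs of arbitrary loop order.
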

The trace of a formal smooth function $f\in C^\infty(X)[[\hbar]]$ is identified with the correlation function of a one-dimensional Chern-Simons model, and the proof is based on explicit computation via Feynman graph weights and the integral of a differential form, which at each point $x_0\in X$ depend on the jets of $f$ and the curvature (and its covariant derivatives) of $X$. 

\subsection*{Notations}
	\begin{itemize}
		\item Throughout this paper, we let $(X, \omega)$ denote a symplectic manifold of real dimension $2n$ with $\omega$ its symplectic form. 
		\item The fiberwise Moyal-Weyl product on the formal Weyl bundle is denoted by $\star_{MW}$.
		\item Formal star products on symplectic manifolds are denoted by $\star$. 
		\item We let $\A_X^{\bullet}$ denote differential forms on a manifold $X$, and let $TX$ and $T^*X$ denote the real tangent and cotangent bundle on $X$ respectively. 
	\end{itemize}

\section*{Acknowledgement}
The author is supported by National Science Foundation of China (Project No. 12471061). The author would like to thank Kwokwai Chan, Conan Leung, Yu Qiao and Tony Yutung Yau for many helpful discussions.

\section{Deformation quantization as a Fr\'echet algebra}

There have been tremendous studies on deformation quantization of symplectic manifolds, most of which focus on its geometric construction (\cite{Fed1994}),  algebraic properties  including the trace of these quantum algebras (\cite{Fed1996, Nest-Tsygan-1995, Fed2000, FFS1995}), or its description as certain operators on Hilbert spaces, for instance  deformation quantization on K\"ahler (and also symplectic) manifolds via Berezin-Toeplitz operators. There are also studies on the covergence of these quantum algebras for small enough evaluation of $\hbar$ (\cite{OMMY}). Here we still regard $\hbar$ as a formal variable, and  define a Fr\'echet topology on $C^\infty(X)[[\hbar]]$ which is compatible with any star product $\star$ making $(C^\infty(X)[[\hbar]],\star)$ a formal deformation quantization.

This Fr\'echet topology on $C^\infty(X)[[\hbar]]$ is defined via a sequence of semi-norms $\{||-||_{\hbar,k}:k\geq 0\}$ which are roughly given by a sum of the suprema of derivatives of a fixed order. It turns out that for any star product $\star$ on $C^\infty(X)[[\hbar]]$, the multiplication operation $(f,g)\mapsto f\star g$ is jointly continuous thus making $(C^\infty(X)[[\hbar]],\star)$ a Fr\'echet algebra. 

\subsection{Fr\'echet spaces}
\

There are several equivalent definition of Fr\'echet spaces, we will adopt the approach via a countable family of semi-norms which is convenient for the space of formal smooth functions. We first recall the notion of semi-norms:
\begin{defn}\label{definition: semi-norms}
	A non-negative function $p$ on a vector space $V$ satisfying the following conditions is called a {\em semi-norm}:
	\begin{enumerate}
		\item For any scalar $\lambda$ and $v\in V$, there is $p(\lambda\cdot v)=|\lambda|\cdot p(v)$;
		\item For all $v_1,v_2\in V$, there is $p(v_1+v_2)\leq p(v_1)+p(v_2)$;
		\item For any $v\in V$, there is $p(v)\geq 0$. 
	\end{enumerate}
\end{defn}
A simple observation is that suppose $p_1, p_2$ are two semi-norms, there sum $p_1+p_2$ is also a semi-norm. 
\begin{defn}\label{definition: Frechet-space-semi-norms}
	Suppose on a vector space $V$, there are a countable family of seminorms $\{||\cdot||_k\}_{k\in\mathbb{Z}_{\geq 0}}$ satisfying the following two properties:	
	\begin{itemize}
		\item Suppose $x\in V$, and $||x||_k=0$ for all $k\geq 0$, then $x=0$;
		\item If a sequence $\{x_n\}$ in $V$ which is Cauchy with respect to each semi-norm $||\cdot||_k$, then there exists $x\in V$, such that $\{x_n\}$ converges to $x$ with respect to each semi-norm $||\cdot||_k$. 
	\end{itemize}
    The associated Fr\'echet topology on $V$ is defined via the following translation-invariant complete metric: 
    \begin{equation}\label{equation: distance-in-Frechet-topology}
    	d(x,y):=\sum_{k=0}^\infty 2^{-k}\cdot\frac{||x-y||_k}{1+||x-y||_k}.
    \end{equation}
	Equivalently, a Fr\'echet space is a locally convex metrizable topological vector space that is complete. 
\end{defn}

	Loosely speaking, the definition says that the distance $d(x,y)$ will be "small enough" if there exists $K>>0$, such that $||x-y||_l$ is small enough for $l=0,1,\cdots, K$.  For instance,  to construct a convergence sequence in $V$ under this topology, we can ignore semi-norms $||\cdot||_l$ for $l>>0$ for each fixed $\epsilon>0$ in  the estimate. This is similar to the asymptotic analysis where $O(\hbar^l)$ is also ignored.

\begin{eg}\label{example: sequence-complex-numbers}
	Let $\C^{\mathbb{N}}$ denote the space of complex number valued sequences. We can define an increasing family of semi-norms on $\C^{\mathbb{N}}$ by 
	\begin{equation}\label{equation: semi-norms-sequence}
		||\varphi||_n:=\max_{k\leq n}|\varphi(k)|. 
	\end{equation} 
\end{eg}
\begin{eg}\label{example: smooth-function-Frechet-space}
For any $m\geq 1$, the space $C^\infty(\mathbb{R}^m)$ of infinitly differentiable functions on $\mathbb{R}^m$ is a Fr\'echet space. For any non-negative integer $l \geq 0$, we can define a semi-norm for every $f\in C^\infty(\mathbb{R}^m)$:
\begin{equation}\label{equation: semi-norms-smooth-functions}
||f||_{l}:=\sup_{|I|\leq l, ||x_0||\leq l}\bigg|\frac{\partial^{|I|}f}{\partial x^I}(x_0)\bigg|.
\end{equation}
Here $I$ denotes a multi-index. 
It is easy to see that these semi-norms satisfy the conditions in Definition \ref{definition: Frechet-space-semi-norms} and makes $C^\infty(\mathbb{R}^m)$ a Fr\'echet space. Since both the domain of taking supremum and the order of partial derivatives increase as $l$ grows, these semi-norms are increasing: $||-||_0\leq ||-||_1\leq\cdots $.
\end{eg}
\begin{rmk}
	In the definition of the $l$-th semi-norms on $C^\infty(\mathbb{R}^n)$ (and later for general smooth manifolds), we took the supremum of all partial derivatives of order $\leq l$ instead of the more standard supremum of partial derivatives of exactly order $l$. This does NOT affect the resulting Fr\'echet topology, but will be more convenient for the discussion of the joint continuity of multiplication in later sections.
\end{rmk}
\begin{rmk}
In section \ref{subsection: example-smooth-functions-manifolds}, we will describe the Fr\'echet topology on the space of smooth functions on a smooth manifold $X$ (compact or not). 
\end{rmk}

\subsection{Fr\'echet algebra}
\

We first give the definition of Fr\'echet algebras:
\begin{defn}
A Fr\'echet algebra is an associative algebra $A$ over the real or complex numbers that at the same time it is also a  Fr\'echet space, and that the multiplication operation $(a,b)\mapsto a\star b$ is required to be {\em jointly continuous}, which requires that for two convergent sequences $a_k\rightarrow a$ and $b_k\rightarrow b$,  there is the convergence of the product sequence $a_k\star b_k\rightarrow a\star b$. 
\end{defn}

An equivalent description of the joint continuity of the muliplication is described in the following lemma:
\begin{lem}\label{lemma: equivalent-description-Frechet-algebra}
If $\{||\cdot||_k: k\geq 0\}$ is an increasing family of seminorms for the Fr\'echet topology of $A$ (meaning that $||\cdot||_1\leq ||\cdot||_2\leq\cdots$). The joint continuity of multiplication is equivalent to there being a constant
$C_n$ and integer $m\geq n$, such that 
\begin{equation}\label{equation: inequality-Frechet-algebra}
||a\star b||_n\leq C_n\cdot ||a||_m||b||_m.
\end{equation}
If the natural number $m$ in the above definition can be chosen to be $m=n$, then such Fr\'echet algebras are called $m$-convex. 
\end{lem}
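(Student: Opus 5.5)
The proof runs in two directions, and both use that the topology is the metric topology of the metric $d$ in \eqref{equation: distance-in-Frechet-topology}, which is first countable. Because the semi-norms increase, a sequence $x_j\to x$ in this metric if and only if $||x_j-x||_k\to 0$ for every $k$. Moreover, the sets $\{x: ||x||_m<\epsilon\}$ with $m\geq 0$ and $\epsilon>0$ form a neighborhood basis of $0$. Since the topology is metrizable, joint continuity of $\star$ is equivalent to sequential joint continuity, and that is the notion used in the definition. It is also equivalent to continuity of the bilinear map $A\times A\to A$ at $(0,0)$.

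\emph{The inequality implies joint continuity.} Suppose $a_j\to a$ and $b_j\to b$. Bilinearity gives
\[
a_j\star b_j-a\star b=(a_j-a)\star b_j+a\star(b_j-b).
\]
Applying \eqref{equation: inequality-Frechet-algebra} to each term yields
\[
||a_j\star b_j-a\star b||_n\leq C_n\big(||a_j-a||_m\,||b_j||_m+||a||_m\,||b_j-b||_m\big).
\]
The quantity $||b_j||_m$ is bounded, because $||b_j||_m\leq ||b||_m+||b_j-b||_m$. Hence the right-hand side tends to $0$ for every $n$, which is exactly convergence $a_j\star b_j\to a\star b$ in the Fr\'echet topology.

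\emph{Joint continuity implies the inequality.} Fix $n$. Since $\star$ is continuous at $(0,0)$, the preimage of the neighborhood $\{c:||c||_n<1\}$ contains a product of basic neighborhoods. Because the semi-norms increase, we may shrink both factors to the same form $\{||a||_m<\delta\}\times\{||b||_m<\delta\}$ and take $m\geq n$. If joint continuity is given only sequentially, argue by contradiction: if no such $m$ and $\delta$ existed, then for each $m$ we could choose $a_m,b_m$ with $||a_m||_m,||b_m||_m<1/m$ and $||a_m\star b_m||_n\geq 1$. Then $a_m\to 0$ and $b_m\to 0$ in the Fr\'echet topology, while $a_m\star b_m\not\to 0$, a contradiction.

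Now take arbitrary $a,b$. When $||a||_m>0$ and $||b||_m>0$, rescale to $a'=\tfrac{\delta}{2}a/||a||_m$ and $b'=\tfrac{\delta}{2}b/||b||_m$. Bilinearity and homogeneity of the semi-norm then give
\[
||a\star b||_n\leq \frac{4}{\delta^2}\,||a||_m\,||b||_m,
\]
so $C_n=4/\delta^2$ works. The main obstacle is the degenerate case, say $||a||_m=0$. In that case $ta$ lies in the $\delta$-neighborhood for every $t>0$. Apply the bound to the pair $\big(ta,\ \tfrac{\delta}{2}b/||b||_m\big)$ (or $(ta,tb)$ if also $||b||_m=0$). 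This gives $||a\star b||_n\leq t^{-1}\cdot\text{const}$ for every $t>0$, so letting $t\to\infty$ shows $||a\star b||_n=0$. The inequality therefore holds trivially in this case.

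The $m$-convexity sentence in the statement is a definition, so it requires no proof.
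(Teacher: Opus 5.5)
The paper states this lemma without proof; it treats it as a standard fact about bilinear maps on Fr\'echet spaces. Later it only uses the direction ``inequality $\Rightarrow$ joint continuity'', in Proposition~\ref{proposition: smooth-function-Frechet-algebra} and Theorem~\ref{theorem: formal-functions-Frechet-algebra}. So there is no argument of the paper to compare against, and your proposal is a correct and complete proof of both directions.

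For the direction the paper relies on, the splitting $a_j\star b_j-a\star b=(a_j-a)\star b_j+a\star(b_j-b)$, together with boundedness of $||b_j||_m$, is exactly what is needed.

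For the converse, your contradiction argument correctly handles the paper's sequential definition of joint continuity. Choosing $||a_m||_m,||b_m||_m<1/m$ forces $a_m,b_m\to 0$, since $||a_m||_k\leq ||a_m||_m$ once $m\geq k$. The rescaling argument, with the separate treatment of the case $||a||_m=0$ by letting $t\to\infty$, correctly produces $C_n=4/\delta^2$ uniformly in $a,b$.

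Two cosmetic remarks:
\begin{itemize}
\item The equivalence between $d$-convergence and convergence in every seminorm holds for any countable family of seminorms. Monotonicity is only needed for the neighborhood basis $\{||x||_m<\epsilon\}$ and for shrinking the two factors to a common index $m\geq n$.
\item In the sub-case $||a||_m=||b||_m=0$, the bound obtained from the pair $(ta,tb)$ decays like $t^{-2}$ rather than $t^{-1}$. This changes nothing.
\end{itemize}
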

\begin{eg}\label{example: C-hbar-Frechet-algebra}
In Example \ref{example: sequence-complex-numbers}, we defined semi-norms on $\C^{\mathbb{N}}\cong\C[[\hbar]]$ making it a Fr\'echet space. It is known that these semi-norms makes $\C^{\mathbb{N}}$ an $m$-convex Fr\'echet algebra with respect to the pointwise multiplication. 

There is also another natural multiplication $\star$ on $\C[[\hbar]]$. We will show that $\mathbb{C}[[\hbar]]$ is also an $m$-convex Fr\'echet algebra under this product. 
Given $a,b\in A$, let $c:=a\star b$ with $c_m=\sum_{i+j=m}a_i\cdot b_j$, and there is
\begin{align*}
	 ||c||_k=&\max_{m\leq k}|c_m|\\
	 \leq & C_k\cdot ||a||_k\cdot ||b||_k.
\end{align*}
\end{eg}



\subsubsection{Example: smooth functions on manifolds}\label{subsection: example-smooth-functions-manifolds}
\

In this subsection, we will explain that smooth functions on a smooth manifold $X$ is also an example of Fr\'echet algebras. We will choose a countable covering of $X=\cup_{i\geq 0} U_i$ (we are not assuming that $X$ is compact) with a fixed coordinate system on each $U_i$, such that each finite union has a compact closure. For each $l\geq 0$, we define the $l$-th semi-norm as follows: let $f\in C^\infty(X)$:
\begin{equation}\label{equation: semi-norm-function-manifolds}
	||f||_l:=\sup_{x_0\in\cup_{i\leq l}\overline{U_i}, |I|\leq l}\bigg\{\bigg|\frac{\partial^{|I|} f}{\partial x^I}(x_0)\bigg|\bigg\}.
\end{equation}
Here we are taking the supremum of all the partial derivatives of order $\leq l$ on all the closure $\overline{U_i}$'s with $i\leq l$, under the chosen local coordinates. Since as $l$ grows, the order of the partial derivatives increases and the domain where we take these derivatives becomes larger, we actually obtain an increasing family of semi-norms. 
\begin{prop}\label{proposition: smooth-function-Frechet-algebra}
	The space of smooth functions $C^\infty(X)$ on a smooth manifold (compact or non-compact) forms a Fr\'echet algebra with respect to the semi-norms $||-||_l$. 
\end{prop}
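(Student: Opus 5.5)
We prove the two parts of Proposition \ref{proposition: smooth-function-Frechet-algebra} separately: first that the semi-norms $||-||_l$ of \eqref{equation: semi-norm-function-manifolds} make $C^\infty(X)$ a Fr\'echet space in the sense of Definition \ref{definition: Frechet-space-semi-norms}, and then that multiplication satisfies the estimate \eqref{equation: inequality-Frechet-algebra}. We may take the covering $\{U_i\}$ to be coordinate charts whose closures are compact and lie inside slightly larger coordinate charts, so that every partial derivative in \eqref{equation: semi-norm-function-manifolds} is a continuous function on a compact set and each $||f||_l$ is finite. The axioms of Definition \ref{definition: semi-norms} are immediate, since the supremum of absolute values of linear expressions in $f$ is homogeneous and subadditive. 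Separation is also immediate: if $||f||_l=0$ for all $l$, then already the order-zero terms give $f=0$ on $\bigcup_i\overline{U_i}=X$.

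For completeness, let $\{f_n\}$ be Cauchy with respect to every $||-||_l$. On each $\overline{U_i}$ the sequence $\partial^I f_n$ is then uniformly Cauchy for every multi-index $I$. So $f_n\to f$ uniformly on compacta, and each $\partial^I f_n\to g_I$ uniformly. The classical theorem on uniform limits of derivatives (applied inductively in $|I|$ in the local coordinates) shows that $f$ is $C^\infty$ on each $U_i$ with $\partial^I f=g_I$. These local limits agree on overlaps because they are all pointwise limits of the same $f_n$, so they glue to $f\in C^\infty(X)$. Then $||f_n-f||_l\to 0$ for every $l$, since each semi-norm is a finite maximum of uniform-convergence quantities. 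This is the only step that needs real analysis, and it is standard; the main point to watch is that the semi-norms only see the closed sets $\overline{U_i}$, which is why the closures must sit inside charts, so that derivatives on the boundary make sense and the limit is smooth up to the boundary.

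For the algebra structure we aim for $m$-convexity, with $m=n$ in Lemma \ref{lemma: equivalent-description-Frechet-algebra}. Fix $l$, a point $x_0\in\overline{U_i}$ with $i\le l$, and a multi-index $|I|\le l$. The Leibniz rule gives
\[
\partial^I(fg)(x_0)=\sum_{J+K=I}\binom{I}{J}\,\partial^J f(x_0)\,\partial^K g(x_0).
\]
Each factor satisfies $|\partial^J f(x_0)|\le ||f||_l$ and $|\partial^K g(x_0)|\le ||g||_l$, because $|J|,|K|\le l$ and the supremum is taken over the same domain. Summing the binomial coefficients gives $\sum_{J+K=I}\binom{I}{J}=2^{|I|}\le 2^l$, hence
\[
||fg||_l\le 2^l\,||f||_l\,||g||_l .
\]
This is exactly where the remark about using all derivatives of order $\le l$, rather than exactly $l$, pays off. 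Lemma \ref{lemma: equivalent-description-Frechet-algebra} then gives joint continuity with $C_l=2^l$. Associativity is inherited from pointwise multiplication, so $C^\infty(X)$ is an ($m$-convex) Fr\'echet algebra.
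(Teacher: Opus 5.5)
Your proof is correct and follows the paper's argument: a pointwise Leibniz-rule estimate on each $\overline{U_i}$ with $i\le l$ gives $||fg||_l\le C_l\,||f||_l\,||g||_l$ (you make the constant explicit as $C_l=2^l$), and Lemma \ref{lemma: equivalent-description-Frechet-algebra} then yields joint continuity. Your check of separation and completeness, and your assumption that each $\overline{U_i}$ lies inside a larger chart so that the semi-norms are finite, supply details the paper leaves implicit.
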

\begin{proof}
	We will apply Lemma \ref{lemma: equivalent-description-Frechet-algebra} here. 
	Since the semi-norms $||-||_{l}$ are defined as the supremum of partial derivatives over enlarging domains, it is enough to prove a pointwise estimate to show the inequality  \eqref{equation: inequality-Frechet-algebra}. 
		For any multi-index $I$ and any $x_0\in\cup_{i\leq |I|} U_i$, there is by Leibniz's rule
		\begin{align*}
			\bigg|\frac{\partial^{|I|}(fg)}{\partial x^I}(x_0)\bigg|=&\bigg|\sum_{K+J=I}C_{K,J}\cdot\frac{\partial^{|K|}f}{\partial x^K}\frac{\partial^{|J|}g}{\partial x^J}\bigg|_{x_0}\\
			\leq & C_I\cdot ||f||_{|I|}\cdot ||g||_{|I|}.
		\end{align*}
		Here the constant $C_I$ only depends on the multi-index $I$. 
		It follows that $||f\cdot g||_l\leq C_l\cdot ||f||_l\cdot ||g||_l$ for any $l\geq 0$.  
\end{proof}

\begin{rmk}
	Different choices of a covering of a smooth manifold $X$ by coordinate charts induce different semi-norms, but will give the same Frechet topology on $C^\infty(X)$. 
\end{rmk}
\begin{rmk}\label{remark: estimate-semi-norms-local}
	For a compact symplectic manifold $X$, we will fix a finite covering  $X=\cup_i U_i$ by Darboux coordinate charts. 
\end{rmk}


\subsection{Formal deformation quantization as a Fr\'echet algebra}
\

Recall that a {\em formal deformation quantization} of a symplectic manifold $(X, \omega)$ is a formal deformation of the commutative algebra $(C^{\infty }(X),\cdot)$ equipped with the classical pointwise multiplication to a noncommutative one $( C^{\infty }( X)[[\hbar]] ,\star) $ equipped with a {\em star product} $\star$ of the following form
$$
f\star g=fg+\sum_{i\geq 1}\hbar^i\cdot C_i(f,g),
$$
where each $C_i(-,-)$ is a bi-differential operator of degree at most $i$ to each function, and that the leading order of noncommutativity is the Poisson bracket $\{-,-\}$ associated to $\omega$, i.e.,
\begin{equation}\label{equation: Poisson-bracket}
	C_1(f,g)-C_1(g,f)= \left. \frac{d}{d\hbar}\left( f\star g - g\star f\right)\right\vert_{\hbar=0}
	= \{ f,g \}.
\end{equation}

In this subsection, we will define a sequence of increasing semi-norms on the space of formal smooth functions $C^\infty(X)[[\hbar]]$ which makes it a Fr\'echet algebra with respect to any star product $\star$ making $C^\infty(X)[[\hbar]]$ a formal deformation quantization of $X$.


\begin{lem-defn}\label{lemma-definition: semi-norms-formal-smooth-functions}
	Let $f=f_0+\hbar\cdot f_1+\hbar^2\cdot f_2+\cdots\in C^\infty(X)[[\hbar]]$ be a formal smooth function. For every $k\geq 0$, the semi-norm $||\cdot||_{\hbar,k}$ is defined as 
	\begin{equation}\label{equation: k-semi-norm-formal-function}
		||f||_{\hbar,k}:=\sum_{i+j=k}||f_i||_j.
	\end{equation}
	Here $||\cdot||_j$ denotes the $j$-th semi-norm of smooth functions on smooth manifolds in equation \eqref{equation: semi-norm-function-manifolds}. There is $||\cdot||_{\hbar,0}\leq ||\cdot||_{\hbar,1}\leq\cdots$, and thus they form a family of increasing semi-norms. There is the following additive formula for the semi-norms $||-||_{\hbar,l}$'s:
	\begin{equation}\label{equation: additive-formula-semi-norms}
	||f||_{\hbar,l}=\sum_{i\geq 0}||\hbar^i\cdot f_i||_{\hbar,l}
	\end{equation}
	(Note that this is a finite sum for every $l$ by Remark \ref{remark: semi-norm-depends-on-truncation-formal-function}.) 
\end{lem-defn}
\begin{rmk}\label{remark: semi-norm-depends-on-truncation-formal-function}
	According to the definition of these semi-norms, higher order terms in $\hbar$ power will not contribute to lower order semi-norms. Explicitly, for a formal smooth function $f=f_0+\hbar\cdot f_1+\hbar^2\cdot f_2+\cdots$, the semi-norm $||f||_{\hbar,k}$ is determined by its truncation $f_0+\hbar\cdot f_1+\cdots +\hbar^k\cdot f_k$. 
\end{rmk}

\begin{rmk}
We can interprete this Fr\'echet topology on $C^\infty(X)[[\hbar]]$ as a combination of the natural topology on $C^\infty(X)$ and the semi-norms on $\C[[\hbar]]$ in Example \ref{example: sequence-complex-numbers}. 
\end{rmk}

\begin{lem}\label{lemma: k-norms-define-Frechet-topology}
	The semi-norms defined in equation  \eqref{equation: k-semi-norm-formal-function} defines a Fr\'echet topology on $C^\infty(X)[[\hbar]]$. 
\end{lem}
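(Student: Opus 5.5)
We verify the two conditions of Definition \ref{definition: Frechet-space-semi-norms} for the family $\{||\cdot||_{\hbar,k}\}_{k\geq 0}$. First, each $||\cdot||_{\hbar,k}$ is a semi-norm: it is a finite sum of the semi-norms $||\cdot||_j$ on $C^\infty(X)$, composed with the linear coefficient maps $f\mapsto f_i$, and such sums are semi-norms. For monotonicity, $||f||_{\hbar,k+1}=\sum_{i+j=k}||f_i||_{j+1}+||f_{k+1}||_0$. Since $||\cdot||_j\leq||\cdot||_{j+1}$ on $C^\infty(X)$, this is at least $||f||_{\hbar,k}$. The additive formula \eqref{equation: additive-formula-semi-norms} also follows, because $||\hbar^i f_i||_{\hbar,l}=||f_i||_{l-i}$ for $i\leq l$ and vanishes otherwise.

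\emph{Separation.} Suppose $||f||_{\hbar,k}=0$ for all $k$. Fix $i$. For every $j$ we have $||f_i||_j\leq||f||_{\hbar,i+j}=0$. The semi-norms $||\cdot||_j$ separate points of $C^\infty(X)$ by Proposition \ref{proposition: smooth-function-Frechet-algebra}: since the $\overline{U_i}$ exhaust $X$, a function with vanishing $j=0$ sup over $\cup_{i\leq j}\overline{U_i}$ for all $j$ is zero. Hence $f_i=0$ for every $i$, so $f=0$.

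\emph{Completeness.} Let $\{f^{(n)}\}$ be Cauchy for every $||\cdot||_{\hbar,k}$. For fixed $i$ and $j$, the inequality $||f^{(n)}_i-f^{(m)}_i||_j\leq||f^{(n)}-f^{(m)}||_{\hbar,i+j}$ shows that each coefficient sequence $\{f^{(n)}_i\}_n$ is Cauchy in every $||\cdot||_j$, i.e.\ in the Fr\'echet space $C^\infty(X)$. We then need completeness of $C^\infty(X)$ with these semi-norms. This is the standard fact that uniform convergence of all partial derivatives on compacta yields a smooth limit: on each chart, $\partial^I f^{(n)}_i$ converges uniformly on $\overline{U_l}$, and the limit of derivatives is the derivative of the limit. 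The limits are compatible across overlapping charts because they are all limits of the same functions. This gives $g_i\in C^\infty(X)$ with $||f^{(n)}_i-g_i||_j\to0$ for all $j$.

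Set $g=\sum_i\hbar^i g_i$. Then $||f^{(n)}-g||_{\hbar,k}=\sum_{i+j=k}||f^{(n)}_i-g_i||_j$ is a finite sum of terms tending to $0$, so $f^{(n)}\to g$ in every semi-norm. The only substantive step is the completeness of $C^\infty(X)$, including gluing across charts. I would cite it as standard, consistent with Example \ref{example: smooth-function-Frechet-space}, rather than reprove it. The rest is bookkeeping made possible by Remark \ref{remark: semi-norm-depends-on-truncation-formal-function}, since each $||\cdot||_{\hbar,k}$ sees only finitely many coefficients.
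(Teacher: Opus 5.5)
Your proposal is correct and follows the same route as the paper. Both separation and completeness are reduced, coefficient by coefficient, to the corresponding properties of $C^\infty(X)$ with its semi-norms $||\cdot||_j$. Your inequality $||f_i||_j\leq||f||_{\hbar,i+j}$ makes explicit something the paper's separation step leaves implicit: $||f||_{\hbar,0}=0$ only forces $f_0=0$ on $U_0$, so the higher semi-norms are needed to conclude $f_0=0$ on all of $X$. You also verify the semi-norm, monotonicity and additivity properties, which the paper only asserts.
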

\begin{proof}
	Let $f=f_0+\hbar\cdot f_1+\cdots$ denote a formal smooth function on $X$, whose semi-norms all vanish: $||f||_{\hbar,l}=0, l\geq 0$. Then $||f||_{\hbar,0}=0$ implies that the $f_0$ vanishes when restricted to $U_0$, i.e.,  $f_0|_{U_0}\equiv 0$. Since all these $U_i$'s cover $X$, it follows that $f_0\equiv 0$ on $X$.  Iteratively we can see that $f_m=0$ for all $m\geq 0$. 
	
	The statement that $C^\infty(X)[[\hbar]]$ is complete with respect to these semi-norms follows from the fact that $C^\infty(X)$ is complete with respect to the semi-norms $||\cdot||_k$ for every $k\geq 0$. 
\end{proof}
\begin{thm}\label{theorem: formal-functions-Frechet-algebra}
	A formal deformation quantization $(C^\infty(X)[[\hbar]],\star)$ of a symplectic manifold $X$ is a Fr\'echet algebra under the semi-norms $||\cdot ||_{\hbar,k}$. 
\end{thm}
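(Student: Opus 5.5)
By Lemma \ref{lemma: k-norms-define-Frechet-topology}, $C^\infty(X)[[\hbar]]$ with the semi-norms $||\cdot||_{\hbar,k}$ is already a Fr\'echet space, and these semi-norms are increasing. So only joint continuity of $\star$ remains. I will verify it through Lemma \ref{lemma: equivalent-description-Frechet-algebra}, aiming for the $m$-convex estimate
$$||f\star g||_{\hbar,n}\leq C_n\,||f||_{\hbar,n}\,||g||_{\hbar,n}.$$

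\textbf{Step 1: coefficient expansion.} Write $f=\sum_a\hbar^a f_a$ and $g=\sum_b\hbar^b g_b$, and set $C_0(f,g)=fg$. Then the $\hbar^k$-coefficient of $f\star g$ is
$$(f\star g)_k=\sum_{a+b+i=k}C_i(f_a,g_b),$$
which is a finite sum. By the definition of $||\cdot||_{\hbar,n}$,
$$||f\star g||_{\hbar,n}=\sum_{j+k=n}||(f\star g)_k||_j\le \sum_{j+k=n}\ \sum_{a+b+i=k}||C_i(f_a,g_b)||_j.$$
Only finitely many terms appear, so it suffices to bound each term separately.

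\textbf{Step 2: pointwise estimate of one term.} This is the analogue of Proposition \ref{proposition: smooth-function-Frechet-algebra}. In the chosen local coordinates on $U_l$, the operator $C_i$ is a bi-differential operator of order at most $i$ in each argument. Its coefficient functions are smooth. The closure $\overline{U_0\cup\cdots\cup U_j}$ is compact, so these coefficients and all their derivatives up to order $j$ are bounded there by a constant $D_{i,j}$.

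Now apply $\partial^I$ with $|I|\le j$ to $C_i(f_a,g_b)$ and expand by Leibniz's rule. Each derivative lands on a coefficient function, on $f_a$, or on $g_b$. The result is a sum of terms of the form (bounded function)$\cdot\partial^K f_a\cdot\partial^L g_b$ with $|K|,|L|\le i+j$. Hence
$$||C_i(f_a,g_b)||_j\le C_{i,j}\,||f_a||_{i+j}\,||g_b||_{i+j},$$
using that $||\cdot||_{i+j}$ takes suprema over a domain containing $\cup_{l\le j}\overline U_l$.

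\textbf{Step 3: index bookkeeping.} The definition gives $||f_a||_{i+j}\le ||f||_{\hbar,a+i+j}$. Since $a+i+j\le k+j=n$ and the semi-norms are increasing, this is at most $||f||_{\hbar,n}$. The same bound holds for $g_b$.

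Summing over the finitely many index tuples $(j,k,a,b,i)$ gives the desired inequality with $m=n$. So $\star$ is jointly continuous, and the algebra is even $m$-convex.

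The main point requiring care is Step 2. One must check that the total order of derivatives falling on $f_a$ stays at most $i+j$, which uses the hypothesis that $C_i$ has order at most $i$. One must also check that the coefficient bounds are uniform, which uses compactness of the finite unions $\overline{U_0\cup\dots\cup U_j}$. The key observation that makes the orders match is that each power of $\hbar$ in $\star$ costs at most one extra derivative. This is exactly what the mixed grading $i+j=k$ in Lemma/Definition \ref{lemma-definition: semi-norms-formal-smooth-functions} absorbs.
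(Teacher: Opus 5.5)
Your proposal is correct and follows essentially the same route as the paper. You expand the $\hbar^k$-coefficient of $f\star g$ into the terms $C_i(f_a,g_b)$. You then bound each term pointwise by Leibniz's rule, using that $C_i$ has order at most $i$ in each argument and bounded coefficients on the compact closures. Absorbing the extra $i$ derivatives into the mixed index gives $||f\star g||_{\hbar,n}\le C_n\,||f||_{\hbar,n}\,||g||_{\hbar,n}$, hence joint continuity (indeed $m$-convexity) via Lemma~\ref{lemma: equivalent-description-Frechet-algebra}. Your bookkeeping $||f_a||_{i+j}\le ||f||_{\hbar,a+i+j}$ with $a+i+j\le n$ is just a more explicit form of the paper's count $|I|+m_1+|K|\le l$.
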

\begin{proof}
	Let $f=f_0+\hbar\cdot f_1+\hbar^2\cdot f_2+\cdots$ and $g=g_0+\hbar\cdot g_1+\hbar^2\cdot g_2+\cdots$ denote two formal smooth functions on the symplectic manifold $X$. We write their star product as 
	\begin{align*}
	h=&h_0+\hbar\cdot h_1+\hbar\cdot h_2+\cdots\\
	:=&f\star g\\
	=&\sum_{m\geq 0}\sum_{m_1+m_2+m_3=m}\hbar^m\cdot C_{m_3}(f_{m_1}, g_{m_2})\\
	=&\sum_{m\geq 0}\sum_{m_1+m_2+m_3=m}\hbar^m\sum_{|I|,|J|\leq m_3}\alpha_{m_3,I,J}\frac{\partial^{|I|}f_{m_1}}{\partial x^I}\frac{\partial^{|J|}g_{m_2}}{\partial x^J}.
	\end{align*}
	As explained in Remark \ref{remark: semi-norm-depends-on-truncation-formal-function}, for any $l\geq 0$ only the truncation $$h_0+\hbar\cdot h_1+\cdots+\hbar^l\cdot h_l$$
	 of $h$ will contribute non-trivially to  $||h||_{\hbar,l}$.  Similar to the proof of Proposition \ref{proposition: smooth-function-Frechet-algebra}, we only need to do a pointwise estimate to prove the inequality \eqref{equation: inequality-Frechet-algebra}.  For any $m\leq l$, we let $K$ denote a multi-index with $|K|\leq l-m$.  For any $x_0\in \cup_{i\leq l}U_i$, there is then the following estimate (recall that $m=m_1+m_2+m_3$): 
	\begin{align*}
    \bigg|\frac{\partial^{|K|} C_{m_3}(f_{m_1},g_{m_2})}{\partial x^K}(x_0)\bigg|=&\bigg|\sum_{|I|,|J|\leq m_3}\frac{\partial^{|K|}}{\partial x^K}\left(\alpha_{m_3,I,J}\frac{\partial^{|I|}f_{m_1}}{\partial x^I}\frac{\partial^{|J|}g_{m_2}}{\partial x^J}\right)(x_0)\bigg|\\
    \leq &\ \tilde{C}_l\cdot ||f||_{\hbar,l}\cdot ||g||_{\hbar,l}.
	\end{align*}
    Here we are using the inequality $|I|,|J|\leq m_3\leq m\leq l$ and the inequality $|K|\leq l-m$ in the counting of the order of derivatives and indices of semi-norms. For instance,
    $$
    |I|+m_1+|K|\leq m_3+m_1+l-m\leq l. 
    $$
     The constant $\tilde{C}_l$ depends on $l$ and the supremum of the coefficients $\alpha_{m_3, I,J}$ and its derivatives of order $\leq l$. It then follows easily that for every $l\geq 0$, there exists a contant $C_l$, such that for all formal smooth functions $f, g$, there is 
    $$
    ||f\star g||_{\hbar,l}\leq C_l\cdot ||f||_{\hbar,l}\cdot ||g||_{\hbar,l}. 
    $$
\end{proof}
\begin{notn}\label{notation: metric-compact-subset}
This Fr\'echet topology on $C^\infty(X)[[\hbar]]$ is defined via the metric in equation \eqref{equation: distance-in-Frechet-topology}. For any compact subset $K\subset X$ and any formal smooth function on $X$, we can define semi-norms $||-||_{K,\hbar,l}$'s by taking supremum within $K$ instead of $X$ and obtain a metric denoted by $d_K(-,-)$. 
\end{notn}

\section{Fedosov quantization and Quantum Weierstrass Approximation Theorem}

In this section, we will first review Fedosov's geometric construction of deformation quantization on symplectic manifolds. By using a quantum Darboux Theorem, we define {\em quantum polynomials} on $X$, and show that they are dense in all smooth formal functions under the Fr\'echet topology. We also give some computation and estimates in Fedosov quantization in Section \ref{subsection: estimates-Fedosov-quantization} which we will need in Section \ref{section: continuity-trace-map}. 


\subsection{Fedosov's approach to deformation quantization}
\

We first briefly recall Fedosov's  beautiful geometric construction of deformation quantization on symplectic manifolds in \cite{Fed1994}. Let $(X,\omega)$ denote a symplectic manifold, on which there is the Weyl bundle:
$$
\W_{X,\mathbb{R}}:=\widehat{\Sym}TX^*.
$$ 
We choose local coordinates $(x^1,\cdots, x^{2n})$ on $X$ under which the symplectic form is locally 
$$
\omega=\omega_{ij}\cdot dx^i\wedge dx^j. 
$$
Let $y^i$ denote a local section of $TX^*$ corresponding to $x^i$ (thus is also a generator of the Weyl bundle $\W_{X,\R}$). Let $\A_X^\bullet(\W_{X,\R})$ denote differential forms on $X$ valued in $\W_{X,\R}$, whose section is locally given by a sum of the following terms:
\begin{equation}\label{equation: Weyl-bundle-monomial}
a_{k,i_1\cdots i_p,j_1\cdots j_q}\hbar^k\cdot dx^{j_1}\cdots dx^{j_q}\otimes y^{i_1}\cdots y^{i_p},
\end{equation}
where $a_{k,i_1\cdots i_p,j_1\cdots j_q}$ is symmetric in $i_1,\cdots, i_p$ and anti-symmetric in $j_1, \cdots, j_q$. 
On the (infinite rank) vector bundle $\W_{X,\R}[[\hbar]]$, there exists a fiberwise (quantum) Moyal-Weyl product which we denote by $\star_{MW}$, making it a bundle of algebras:
\begin{equation}\label{equation: Moyal-Weyl-product}
(\alpha\star_{MW}\beta)(x,\hbar):=\exp\left(\frac{\hbar}{2}\omega^{ij}\frac{\partial}{\partial y^i}\frac{\partial}{\partial z^j}\right)\alpha(y,\hbar)\beta(z,\hbar)\bigg|_{y=z=x}.
\end{equation}

We define two operators on $\A_X^\bullet(\W_{X,\R})$ by
\begin{equation}\label{equation: delta-operator}
	\begin{aligned}
	\delta(a):=&dx^i\wedge\frac{\partial a}{\partial y^i}=\frac{1}{\hbar}[\omega_{ij}(dx^i\otimes y^j-dx^j\otimes y^i),a]_{\star_{MW}},\\
	\delta^*(a):=&y^i\cdot\iota_{\partial_{x^i}}(a). 
	\end{aligned}
\end{equation}
We define $\delta^{-1}$ by requiring that it acts on the monomial \eqref{equation: Weyl-bundle-monomial} as $\frac{\delta^*}{p+q}$ if $p+q>0$ and zero if $p+q=0$. 

Fedosov constructed flat connections on the formal Weyl bundle $\W_{X,\R}[[\hbar]]$ of the following form:
\begin{equation}\label{equation: Fedosov-connection}
\begin{aligned}
D:=&\nabla-\delta+\frac{1}{\hbar}[I,-]_{\star_{MW}}\\
=&\nabla+\frac{1}{\hbar}[\gamma,-]_{\star_{MW}}.
\end{aligned}
\end{equation}
Here $[-,-]_{\star_{MW}}$ denotes the bracket associated to the fiberwise Moyal-Weyl product. In the last equality we have used the fact that the fiberwise de Rham operator $\delta$ can be expressed as a bracket with respect to the Moyal-Weyl product $\star_{MW}$. By construction, these flat connections are compatible with the fiberwise product $\star_{MW}$ on the Weyl bundle. Moreover, there is the following one-to-one correspondence:
$$
\sigma: \Gamma^{flat}(X,\W_{X,\mathbb{R}}[[\hbar]])\cong C^\infty(X)[[\hbar]].
$$ 
Here $\sigma:\W_{X,\mathbb{R}}[[\hbar]]\rightarrow C^\infty(X)[[\hbar]]$ denotes the symbol map assigning $y^i$'s in the Weyl bundle to $0$. For every smooth function $f\in C^\infty(X)[[\hbar]]$, we let $O_f$ denote the corresponding (unique) flat section with symbol $f$. 
\begin{defn}
On the formal Weyl bundle, there is the following {\em weight}
$$
|\hbar|=2, \hspace{6mm}|y^i|=1.
$$
This weight induces a filtration on the formal Weyl bundle: we let $(\W_{X,\R}[[\hbar]])_{\geq l}$ denote the sub-bundle of weights $\geq l$.
\end{defn}
From equation \eqref{equation: Moyal-Weyl-product} it is easy to see that the fiberwise Moyal-Weyl product $\star_{MW}$ is compactible with this weight.  The operators $\delta$ and $\delta^{-1}$ decreases and increases the weight by $1$ respectively. 

\begin{eg}\label{example: Fedosov-connection-flat-spaces}
On the flat space $\mathbb{R}^{2n}$ with the standard symplectic form
$$
\omega=\sum_{i=1}^n dx^{2i-1}\wedge dx^{2i},
$$
we have the following Fedosov connection:
\begin{equation}\label{equation: Fedosov-connection-R-2n}
	D=d-\delta=\sum_{i=1}^{2n}dx^i\left(\frac{\partial}{\partial x^i}-\frac{\partial}{\partial y^i}\right).
\end{equation}
In particular, the flat section $O_f$ associated to a smooth function is givens by its Taylor expansion:
\begin{equation}\label{equation: flat-section-simple-Fedosov-connection}
O_f=\sum_{l\geq 0}(\delta^{-1}\circ d)^l(f)=f+\sum_{i=1}^{2n}\frac{\partial f}{\partial x^i}y^i+\cdots.
\end{equation}
\end{eg}

\subsection{Quantum Darboux Theorem and Quantum Weierstrass Approximation}
\

It was shown in \cite{Fed1996} that every Fedosov connection can be locally written as in Example \ref{example: Fedosov-connection-flat-spaces}, known as the "Quantum Darboux Theorem". 
\begin{thm}\label{theorem: quantum-Darboux}
	Let $X$ be a symplectic manifold with a Fedosov connection $D$. Then locally on any small enough $U\subset X$, there exists a Darboux coordinate system $(x^1,\cdots, x^{2n})$ such that the connection $D$ is locally of the form as in Example \ref{example: Fedosov-connection-flat-spaces}:
	$$
	D|_U=d-\delta=\sum_{i=1}^{2n}dx^i\left(\frac{\partial}{\partial x^i}-\frac{\partial}{\partial y^i}\right).
	$$
\end{thm}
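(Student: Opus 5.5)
Proving Theorem \ref{theorem: quantum-Darboux} amounts to finding a local gauge transformation that takes a given Fedosov connection to the standard flat one. We work on a contractible Darboux neighbourhood $U$, where the classical Darboux theorem supplies coordinates $(x^1,\dots,x^{2n})$ with $\omega$ constant. Write $D|_U=\nabla+\frac{1}{\hbar}[\gamma,-]_{\star_{MW}}$ and compare it with $D_0:=d-\delta=d+\frac{1}{\hbar}[\gamma_0,-]_{\star_{MW}}$, where $\gamma_0=-\omega_{ij}dx^i\otimes y^j$ up to the normalization in \eqref{equation: delta-operator}. The claim is that, possibly after a further change of Darboux coordinates (a symplectomorphism of $U$), there is a fiberwise automorphism $\Phi=\exp\!\big(\tfrac{1}{\hbar}[H,-]_{\star_{MW}}\big)$ of $(\W_{X,\R}[[\hbar]]|_U,\star_{MW})$ with $H$ of weight $\geq 3$ such that $\Phi\circ D\circ\Phi^{-1}=D_0$. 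Equivalently, the section $\gamma$ is carried to $\gamma_0$ by the gauge action
$$\gamma\mapsto e^{\mathrm{ad}_H/\hbar}\gamma-\frac{e^{\mathrm{ad}_H/\hbar}-1}{\mathrm{ad}_H/\hbar}\,dH,$$
modulo central $1$-forms.

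The first step is to normalize the difference $\nabla-d$. In Darboux coordinates the symplectic connection is $\nabla=d+\frac{1}{\hbar}[\Gamma,-]_{\star_{MW}}$ with $\Gamma=\frac12\Gamma_{ijk}y^iy^jdx^k$ of weight $2$. Hence $D=d+\frac{1}{\hbar}[\gamma_0+r,-]_{\star_{MW}}$, where $r$ is a $\W$-valued $1$-form whose components all have weight $\geq 2$. Flatness of $D$ and of $D_0$ then gives the Maurer--Cartan type equation
$$D_0 r+\tfrac{1}{\hbar}r\star_{MW}r=\Omega,$$
where $\Omega$ is a central closed $2$-form (the Weyl curvature). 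Since $\omega$ is already constant, the $\hbar^0$ part of $\Omega$ vanishes. On the contractible $U$ we have $\Omega=d\beta$, and absorbing $\beta$ into $r$ (central $1$-forms do not affect the bracket) we may assume $\Omega=0$.

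Next, kill $r$ weight by weight. Suppose $r$ has lowest weight component $r_{(k)}$ with $k\geq 2$. The weight-$(k-1)$ part of the Maurer--Cartan equation reads $\delta r_{(k)}=0$, because $d$ preserves weight, $\delta$ lowers it by one, and $r\star_{MW} r$ has weight at least $2k-1>k-1$. By the Hodge decomposition $\delta\delta^{-1}+\delta^{-1}\delta=\mathrm{id}-\pi_0$ on forms of positive form degree, we get $r_{(k)}=\delta\delta^{-1}r_{(k)}$. Set $H_{(k+1)}:=\delta^{-1}r_{(k)}$, so that $\frac{1}{\hbar}[H_{(k+1)},\gamma_0]_{\star_{MW}}=\pm\delta H_{(k+1)}$ cancels $r_{(k)}$ up to sign. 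Gauging by $H_{(k+1)}$ removes the weight-$k$ part while altering only higher weights. The possible obstruction is a weight-$k$ piece of $r_{(k)}$ of the form $dx^i\otimes(\text{pure function})$: the component with no $y$'s is central, and $\delta^{-1}$ kills it anyway. The weight-$1$ piece is handled by a classical symplectomorphism, which is why the coordinates may need to change. The gauges compose into a convergent infinite product, because the weights of the $H_{(k+1)}$ strictly increase, so the product converges in the $(y,\hbar)$-adic topology.

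The main obstacle is a bookkeeping issue: one must ensure that each gauge step preserves the Maurer--Cartan equation, so that the closedness $\delta r_{(k)}=0$ is available at every stage, and that the $d$-term $-dH$ in the gauge formula is of higher weight. The latter holds because $dH_{(k+1)}$ has weight $k+1$. Once this is checked, $\Phi$ intertwines $D$ with $D_0$ and preserves symbols up to the identification of the coordinates, which gives the local form claimed in Theorem \ref{theorem: quantum-Darboux}.
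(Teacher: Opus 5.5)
The paper does not prove this statement; it only cites Fedosov's book, where the local normal form is a special case of his general equivalence theory for Abelian connections. You instead give a direct local gauge-fixing argument, and it is correct.

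\textbf{Comparison with the cited route.} Your key observation is that in a classical Darboux chart $\nabla-d=\frac{1}{\hbar}\mathrm{ad}_\Gamma$ is already inner of weight $2$. So the symplectic connection, Fedosov's correction $I$, and the curvature difference (exact, because $U$ is contractible) all merge into a single Maurer--Cartan element $r$ of weight $\geq 2$ for $d-\delta$. You then remove $r$ weight by weight using only $\delta r_{(k)}=0$ and the identity $\delta\delta^{-1}+\delta^{-1}\delta=\mathrm{id}-\pi_0$. No separate step for changing the symplectic connection is needed. Fedosov's route is organized globally: it yields the classification by the Weyl curvature class, of which the local statement is one instance. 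Yours is a self-contained, purely local construction.

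\textbf{Reading of the statement.} You are right to read the theorem as gauge equivalence, $\Phi\circ D\circ\Phi^{-1}=d-\delta$. Taken literally, $D|_U=d-\delta$ would force the weight-$2$ part $\Gamma$ of the connection form to vanish, i.e. $\nabla$ would have to be flat on $U$.

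\textbf{The step you call the main obstacle.} It is immediate. Under your gauge formula the curvature transforms as $dA'+\frac1\hbar A'\star_{MW}A'=\Phi\big(dA+\frac1\hbar A\star_{MW}A\big)$; differentiate along $tH$ to see this. Since $\Phi$ fixes central forms, the equation with $\Omega=0$ is preserved exactly at every step.

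\textbf{Side remarks to correct.} None of these affects the argument.
\begin{enumerate}
\item The term $\frac1\hbar r\star_{MW}r$ has weight $\geq 2k-2$, not $2k-1$. This still exceeds $k-1$ for $k\geq 2$, so your conclusion stands.
\item Central pieces $\beta_i\,dx^i$ are not an obstruction. Also, $\delta^{-1}$ does not kill them: $\delta^{-1}(\beta_i\,dx^i)=\beta_i y^i$. They are removed by the same gauge step, since $\frac1\hbar[\beta_i y^i,\gamma_0]_{\star_{MW}}=\pm\beta_i\,dx^i$.
\item In Darboux coordinates $r$ has no weight-$1$ part, so no further symplectomorphism is ever needed. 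Any contractible classical Darboux chart works.
\item $\Phi$ does not preserve symbols in general. The induced identification of $(C^\infty(U)[[\hbar]],\star)$ with the Moyal algebra is $f\mapsto\sigma(\Phi(O_f))$, which is $\mathrm{id}+O(\hbar)$ but not the identity in general. So the quantum Darboux coordinates are $\hat x^i=\sigma(\Phi^{-1}(x^i+y^i))=x^i+O(\hbar)$, not the classical $x^i$. This is worth keeping in mind for how the theorem is used afterwards.
\end{enumerate}
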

By the explicit formula in equation \eqref{equation: flat-section-simple-Fedosov-connection}, the flat sections associted to the coordinate functions $x^1,\cdots, x^{2n}$ are of the following simple form:
\begin{equation}\label{equation: flat-section-Darboux-coordinate-functions}
O_{x^i}=x^i+y^i.
\end{equation}
We call the algebra generated by these $x^i$'s under the star product {\em quantum polynomials}.
There is the following lemma describing the star product under quantum Darboux coordinates:
\begin{lem}\label{lemma: star-product-quantum-Darboux-coordinates}
	For two local formal smooth functions $f,g\in C^\infty(U)[[\hbar]]$, the quantum product $\star$ under a quantum Darboux coordinate is given by the Moyal-Weyl product:
	$$
	(\alpha\star_{MW}\beta)(x,\hbar):=\exp\left(\frac{\hbar}{2}\omega^{ij}\frac{\partial}{\partial y^i}\frac{\partial}{\partial z^j}\right)\alpha(y,\hbar)\beta(z,\hbar)\bigg|_{y=z=x}
	$$
\end{lem}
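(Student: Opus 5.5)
The plan is to compute $f\star g$ through Fedosov's correspondence $\sigma$. By definition, the star product on $C^\infty(U)[[\hbar]]$ induced by $D$ is
$$
f\star g=\sigma\left(O_f\star_{MW}O_g\right),
$$
where $O_f,O_g$ are the unique flat sections with symbols $f,g$. This formula makes sense because $D$ is a derivation of $\star_{MW}$, so $O_f\star_{MW}O_g$ is again flat.

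First, on $U$ I use the Quantum Darboux Theorem \ref{theorem: quantum-Darboux}. It lets me take $D|_U=d-\delta$ in Darboux coordinates, where $\omega^{ij}$ is constant. Example \ref{example: Fedosov-connection-flat-spaces} then identifies flat sections explicitly as formal Taylor expansions:
$$
O_f(x,y)=\sum_{I}\frac{1}{I!}\frac{\partial^{|I|}f}{\partial x^I}(x)\,y^I=f(x+y),
$$
understood as a formal power series in $y$. To check this, note that $(\partial_{x^i}-\partial_{y^i})f(x+y)=0$, so the right-hand side is flat. It also has symbol $f$, so uniqueness of flat sections gives equality. This holds coefficientwise in $\hbar$, so it applies to formal functions $f=\sum\hbar^kf_k$ as well.

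Second, I substitute into the fiberwise Moyal--Weyl formula \eqref{equation: Moyal-Weyl-product}. Since $\omega^{ij}$ is constant, the operator $\exp\big(\frac{\hbar}{2}\omega^{ij}\partial_{y^i}\partial_{z^j}\big)$ acting on $f(x+y)g(x+z)$ has a simple form. On functions of $x+y$ (respectively $x+z$), $\partial_{y^i}$ acts as the derivative of $f$ in its argument (respectively $\partial_{z^j}$ acts as the derivative of $g$). So
$$
O_f\star_{MW}O_g=\exp\left(\frac{\hbar}{2}\omega^{ij}\frac{\partial}{\partial u^i}\frac{\partial}{\partial v^j}\right)f(u)g(v)\bigg|_{u=x+y,\;v=x+z}.
$$
Applying $\sigma$ sets $y=z=0$. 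This gives the Moyal--Weyl product of $f$ and $g$ at $x$, which is the stated formula after renaming variables.

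The only delicate step is justifying the formal manipulations: exchanging the infinite sum in $\hbar$, the Taylor series in $y$, and the exponential of a differential operator. I would handle it by working order by order in the weight filtration ($|\hbar|=2$, $|y|=1$). At each fixed $\hbar$-power only finitely many derivative terms contribute, and evaluation at $y=z=0$ commutes with the operator $\omega^{ij}\partial_{y^i}\partial_{z^j}$, because the coefficients are constant in the quantum Darboux chart. Everything therefore reduces to finite identities of polynomial differential operators.
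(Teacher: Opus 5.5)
Your proposal is correct and is the argument the paper leaves implicit: the paper states this lemma without proof as a consequence of Theorem \ref{theorem: quantum-Darboux} (read literally as $D|_U=d-\delta$) together with the Taylor-expansion formula \eqref{equation: flat-section-simple-Fedosov-connection}, and computing $f\star g=\sigma(O_f\star_{MW}O_g)$ with $O_f(x,y)=f(x+y)$, as you do, is what is intended. The only slip is one of wording: evaluation at $y=z=0$ does not literally commute with $\omega^{ij}\partial_{y^i}\partial_{z^j}$; what you actually use is that $\partial_{y^i}$ acts on $f(x+y)$ as the derivative of $f$ in its argument, so setting $y=z=0$ after applying the exponential gives the constant-coefficient Moyal operator applied to $f(u)g(v)$ at $u=v=x$.
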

There is then the following corollary:
\begin{cor}\label{corollary: quantum-classical-polynomial}
	Let $p=p_0+\hbar\cdot p_1+\cdots+\hbar^m\cdot p_m$ denote a quantum polynomial. Then each $p_i$ is a classical polynomial. Conversely,  a classical polynomial can be written as a sum of quantum polynomials. 
\end{cor}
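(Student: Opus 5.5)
The plan is to work entirely in the quantum Darboux chart of Theorem \ref{theorem: quantum-Darboux}, where Lemma \ref{lemma: star-product-quantum-Darboux-coordinates} identifies $\star$ with the Moyal--Weyl product in the coordinates $x^1,\dots,x^{2n}$, and to induct on the number of factors.

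\textbf{First claim: each $p_i$ is a classical polynomial.} A quantum polynomial is a finite linear combination (with coefficients in $\C[[\hbar]]$, or $\C[\hbar]$) of words $x^{i_1}\star\cdots\star x^{i_k}$. It therefore suffices to show that the Moyal--Weyl product of two classical polynomials $a,b$ with coefficients in $\C[\hbar]$ is again such a polynomial. In the Darboux chart $\omega^{ij}$ is constant. Hence in
$$
\exp\Big(\frac{\hbar}{2}\omega^{ij}\partial_{y^i}\partial_{z^j}\Big)\,a(y)\,b(z)\Big|_{y=z=x}
$$
each term of order $\hbar^r$ applies $r$ derivatives to $a$ and $r$ to $b$. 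The series therefore terminates once $r>\min(\deg a,\deg b)$. Each surviving term is a constant-coefficient derivative of $a$ times one of $b$, which is a polynomial in $x$. Inducting on the word length gives that every $p_i$ is a polynomial, and it also shows that the $\hbar$-expansion is finite.

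\textbf{Converse: every classical polynomial is a sum of quantum polynomials.} I induct on the degree $d$ of a monomial $x^I=x^{i_1}\cdots x^{i_d}$ and compare it with the quantum word $x^{i_1}\star\cdots\star x^{i_d}$. By the computation above, this word equals $x^{i_1}\cdots x^{i_d}$ plus terms of positive $\hbar$-order. Each $\hbar$ there comes with one derivative on each side, so those terms are $\hbar^r$ times classical polynomials of degree $d-2r<d$.

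A cleaner alternative is the symmetrized word $\frac{1}{d!}\sum_{\sigma}x^{i_{\sigma(1)}}\star\cdots\star x^{i_{\sigma(d)}}$ (Weyl ordering). With constant $\omega^{ij}$ this equals exactly $x^{i_1}\cdots x^{i_d}$, because the antisymmetric corrections cancel pairwise. I would first try to verify this directly. The identity holds because the Weyl-ordered exponential gives $e^{t\cdot x}\star$-powers equal to the classical exponential: for linear $\ell$, $\ell\star\ell\star\cdots\star\ell=\ell^d$ since $\omega^{ij}\ell_i\ell_j=0$. Then I would polarize. Polarization expresses $x^{i_1}\cdots x^{i_d}$ as a linear combination of $\ell^d=\ell^{\star d}$ for various linear forms $\ell$, and each $\ell^{\star d}$ is manifestly a quantum polynomial.

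If the polarization route stalls, the fallback is the induction: subtract $x^{i_1}\star\cdots\star x^{i_d}$ and apply the inductive hypothesis to the lower-degree remainders, which carry powers of $\hbar$. The expected main obstacle is only bookkeeping, namely checking that the Darboux-chart constancy of $\omega^{ij}$ is what makes the derivative count drop the degree by exactly $2r$. The power-of-linear-forms trick avoids even this.
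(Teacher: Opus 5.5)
Your proof is correct. The first half is the paper's argument: with constant $\omega^{ij}$, the Moyal--Weyl product preserves polynomials and the $\hbar$-series terminates. Your fallback for the converse is also exactly the paper's proof. It inducts on the degree $m$ of a monomial, subtracts $x^{k_1}\star\cdots\star x^{k_m}$, and applies the inductive hypothesis to the remainder, which is a lower-degree polynomial with $\hbar$-coefficients.

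Your primary route is genuinely different. Antisymmetry and constancy of $\omega^{ij}$ give $\ell^{\star d}=\ell^d$ for every linear form $\ell=\sum_i c_ix^i$. The polarization identity
\[
x^{i_1}\cdots x^{i_d}=\frac{1}{2^d\, d!}\sum_{\epsilon\in\{\pm 1\}^d}\epsilon_1\cdots\epsilon_d\,\bigl(\epsilon_1x^{i_1}+\cdots+\epsilon_dx^{i_d}\bigr)^d
\]
then writes every classical polynomial as a scalar combination of $\star$-powers of linear forms.

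This buys more than the corollary asks. Every classical polynomial is itself a quantum polynomial with constant coefficients, i.e.\ Weyl ordering is the identity on symbols, with no $\hbar$-corrections and no induction. For instance, in the proof of Theorem \ref{theorem: quantum-Weierstrass} you could take $\tilde{p}_0$ directly, with no need to build $p_0$ and then correct it by $p_1$.

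The price is that your route depends on the specific Moyal--Weyl form. For ordered products such as Wick or standard ordering, $\ell\star\ell\neq\ell^2$ in general. Only the paper's degree-lowering induction survives there, and it needs $\hbar$ to be allowed among the coefficients of quantum polynomials.

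One small point: your remark that ``the antisymmetric corrections cancel pairwise'' is only a heuristic for the symmetrized word. The rigorous justification is the one you give right after it. Both sides are symmetric multilinear in the linear inputs and agree on the diagonal, so they agree everywhere by polarization.
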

\begin{proof}
	The first statement follows from Lemma \ref{lemma: star-product-quantum-Darboux-coordinates}. Here we will only prove the second statement for a monomial $x^{k_1}\cdots x^{k_m}$. We will do an inductive argument on the degree $m$. For $m=0$ this is obvious. For $m>0$, we can quantize it to $x^{k_1}\star \cdots\star x^{k_m}$ whose classical part (i.e., modulo $\hbar$) is exactly the classical one. By Lemma \ref{lemma: star-product-quantum-Darboux-coordinates}, $x^{k_1}\cdots x^{k_m}-x^{k_1}\star \cdots\star x^{k_m}$ is still a polynomial (possibly with $\hbar$), whose polynomial degree in $x^1,\cdots, x^{2n}$ is strictly smaller than $m$. Then the induction hypothesis finishes the proof. 
\end{proof}

Equation \eqref{equation: flat-section-Darboux-coordinate-functions} implies that the flat section associated to a quantum polynomial must be a polynomial in both $y^i$'s and $\hbar$. We have the following generalization of quantum polynomials: 
\begin{defn}\label{definition: quantizable-functions}
	We say a formal smooth function $f$ is quantizable if its associated flat section $O_f$ lives in the finite weight sub-bundle of $\W_{X,\R}[[\hbar]]$. 
\end{defn}
From the fact that the fiberwise Moyal-Weyl product is compatible with the weight on $\W_{X,\mathbb{R}}[[\hbar]]$,  it is easy to see that quantizable functions form a subalgebra of all formal smooth functions under the formal star product. Moreover, since the star product of two quantizable functions is still quantizable (thus must be a polynomial in $\hbar$ instead of a formal power series), we can take their evaluation at any $\hbar\in\C$ to obtain a {\em non-formal quantization}. It follows from equation \eqref{equation: flat-section-Darboux-coordinate-functions}, there are the following inclusions:
\begin{equation}\label{equation: inclusions-formal-functions}
	\textit{Quantum polynomials}\subset\textit{Quantizable functions}\subset\textit{Formal smooth functions}.
\end{equation}

Recall that the classical Weierstrass Approximation theorem says that on the closed interval $[0,1]$, any continuous function can be uniformly approximated by polynomials. Equivalently, the sub-algebra of polynomials on $[0,1]$ is dense in $C^0([0,1])$ under the $C^0$-topology. The first constructive proof is given by Bernstein using a special kind of polynomials for each continuous function $f$, now known as Bernstein polynomials. It turns out that not only do the Bernstein polynomials converge to $f$, but their derivatives of any order converge to the corresponding derivatives of $f$, if they exist and are continuous.

In \cite{VV2015}, a Bernstein polynomials $\tilde{B}_n(f;x)$ is defined for every function $f\in C^m(\mathbb{R}^n)$, and the {\em uniform convergence} is proved on any $n$-dimensional cube $\mathbb{K}^k$ for any multi-index $I=(i_1,\cdots, i_n)$ with $|I|\leq m$.
$$
\tilde{B}_n^{(I)}(f;x)\rightarrow f^{(I)}(x), \hspace{5mm}x\in \mathbb{K}^d.
$$
This immediately implies the following theorem which we will need later:
\begin{thm}\label{theorem: classical-Weierstrass-theorem}
Let $f\in C^\infty(\mathbb{R}^n)$ be an infinitely differentiable function on $\mathbb{R}^n$. There exists a sequence of polynomials $\{p_i\}$, such that on any compact subset $K\subset\R^n$ there is for any multi-index $I$, 
$$
\sup\bigg\{\bigg|\frac{\partial^{|I|}(f-p_i)}{\partial x^I}(x_0)\bigg|: x_0\in K\bigg\}\rightarrow 0.
$$
\end{thm}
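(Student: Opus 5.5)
The plan is to build a single sequence of polynomials by a diagonal argument. The cube result of \cite{VV2015} only gives, for one fixed cube and one fixed finite order $m$, uniform convergence of the derivatives $\tilde{B}^{(I)}_k(f;x)\to f^{(I)}(x)$ for $|I|\leq m$. The statement, however, needs one sequence $\{p_i\}$ that works simultaneously for every compact $K$ and every multi-index $I$.

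First I will transport the cube result to arbitrarily large cubes $Q_N:=[-N,N]^n$. Let $\phi_N:[0,1]^n\to Q_N$ be the affine map $\phi_N(t)=2Nt-N$ in each coordinate, and set $f_N:=f\circ\phi_N$, which is smooth on a neighbourhood of $[0,1]^n$. By the chain rule,
$$\partial^I_t (f_N)=(2N)^{|I|}\,(\partial^I_x f)\circ\phi_N .$$
Hence if $q$ is a polynomial in $t$, then $p:=q\circ\phi_N^{-1}$ is a polynomial in $x$, and
$$\sup_{Q_N}\big|\partial^I_x(f-p)\big|=(2N)^{-|I|}\sup_{[0,1]^n}\big|\partial^I_t(f_N-q)\big|\leq\sup_{[0,1]^n}\big|\partial^I_t(f_N-q)\big| .$$
Since $N\geq 1$, the scaling factor only helps. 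Applying the theorem of \cite{VV2015} to $f_N\in C^N$ on the unit cube with $m=N$, and taking the Bernstein degree large enough, I obtain a polynomial $q_N$ with $\sup_{[0,1]^n}|\partial^I_t(f_N-q_N)|<1/N$ for all $|I|\leq N$. There are only finitely many such $I$, so one degree works for all of them simultaneously. Set $p_N:=q_N\circ\phi_N^{-1}$, so that
$$\sup_{Q_N}\big|\partial^I(f-p_N)\big|<1/N\quad\text{for all }|I|\leq N .$$

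Next comes the diagonal conclusion. Given a compact $K\subset\R^n$ and a multi-index $I$, pick $N_0$ with $K\subset Q_{N_0}$ and $|I|\leq N_0$. For every $N\geq N_0$ we have $K\subset Q_N$ and $|I|\leq N$, so
$$\sup_{x_0\in K}\big|\partial^I(f-p_N)(x_0)\big|<1/N\longrightarrow 0 .$$
This is exactly the claim.

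The only real point to check is the first step: that the cube statement of \cite{VV2015} applies uniformly to all derivatives of order $\leq m$ at once, and that it transfers correctly under the affine rescaling. I expect this to be the main, though mild, obstacle. The rescaling estimate above handles the transfer, and the finiteness of the set $\{|I|\leq N\}$ handles the uniformity. Everything else is bookkeeping.
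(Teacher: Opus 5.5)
Your proposal is correct and takes the same route as the paper, which cites the cube result of \cite{VV2015} and says it ``immediately implies'' the theorem. Your affine rescaling to $[-N,N]^n$ (where the factor $(2N)^{-|I|}\leq 1$ only helps) and your diagonal choice of $p_N$, with error $<1/N$ on $[-N,N]^n$ for all $|I|\leq N$, supply exactly the details the paper leaves implicit.
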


We have the following {\em Quantum Weierstrass Approximation} theorem, which says that quantum polynomials are locally dense in all formal smooth functions, and immediately implies that quantizable functions are locally dense by the inclusions \eqref{equation: inclusions-formal-functions}. 
\begin{thm}\label{theorem: quantum-Weierstrass}
For any quantum Darboux chart $U\subset X$, the space of quantum polynomials and quantizable functions are dense in the following sense: let $f\in C^\infty(U)[[\hbar]]$, then for any compact subset $K\subset U$ and any $\epsilon>0$, there exists a quantum polynomial $p$, such that 
$$
d_K(f,p)<\epsilon. 
$$  
(see Notation \ref{notation: metric-compact-subset} for the definition of $d_K$).
\end{thm}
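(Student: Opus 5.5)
The plan is to reduce the quantum statement to the classical one, Theorem \ref{theorem: classical-Weierstrass-theorem}, applied coefficient by coefficient in $\hbar$. Then I use Corollary \ref{corollary: quantum-classical-polynomial} to convert classical polynomials into combinations of quantum polynomials. Fix $f=f_0+\hbar f_1+\cdots\in C^\infty(U)[[\hbar]]$, a compact $K\subset U$, and $\epsilon>0$. Since the semi-norms are increasing and $d_K(f,p)=\sum_k 2^{-k}\frac{\|f-p\|_{K,\hbar,k}}{1+\|f-p\|_{K,\hbar,k}}$, I first choose $N$ with $\sum_{k>N}2^{-k}<\epsilon/2$. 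It then suffices to find $p$ with $\|f-p\|_{K,\hbar,N}<\epsilon/4$, since this bounds all $\|f-p\|_{K,\hbar,k}$ for $k\le N$. By Remark \ref{remark: semi-norm-depends-on-truncation-formal-function}, only $f_0,\dots,f_N$ matter. Concretely, it is enough to have $\|f_i-p_i\|_{K,N-i}<\epsilon/(4(N+1))$ for $i\le N$, where $p=\sum_i\hbar^i p_i$.

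Working in the Darboux coordinates on $U$, which identify $U$ with an open subset of $\R^{2n}$, I would first extend each $f_i$, $i\le N$, to a smooth function on $\R^{2n}$. I do this by multiplying by a cutoff that equals $1$ on a neighbourhood of $K$, which leaves the relevant sup-norms over $K$ unchanged. Theorem \ref{theorem: classical-Weierstrass-theorem} then gives classical polynomials $q_i$ with $\sup_{x\in K}|\partial^I(f_i-q_i)|$ as small as desired for all $|I|\le N$. Set $q=\sum_{i\le N}\hbar^i q_i$. This is a polynomial in $x$ and $\hbar$ with $\|f-q\|_{K,\hbar,N}<\epsilon/4$.

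It remains to see that $q$ is itself a quantum polynomial, understood as an element of the $\C[\hbar]$-span of star products of the $x^i$, i.e.\ of the algebra they generate over $\C[[\hbar]]$. By Corollary \ref{corollary: quantum-classical-polynomial}, each classical polynomial $q_i$ is a sum of quantum polynomials, and multiplying by $\hbar^i$ preserves this: $\hbar^i$ is a scalar for $\star$, since $\hbar^i\star a=\hbar^i a$ by Lemma \ref{lemma: star-product-quantum-Darboux-coordinates}. Hence $p:=q$ is a quantum polynomial, and $d_K(f,p)<\epsilon/4+\epsilon/2<\epsilon$.

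The density of quantizable functions follows from the inclusions \eqref{equation: inclusions-formal-functions}. The only delicate point is the bookkeeping in the inductive step of Corollary \ref{corollary: quantum-classical-polynomial}. There the correction terms carry explicit powers of $\hbar$, so one must confirm that "quantum polynomial" is taken to allow $\C[\hbar]$-coefficients. Otherwise a classical polynomial such as $x^1x^2$ could only be written as $x^1\star x^2$ minus an $\hbar$-multiple of a constant. I expect this interpretive step to be the main obstacle rather than any analytic estimate. I would resolve it by taking the algebra generated over $\C[[\hbar]]$, or equivalently $\C[\hbar]$, as the paper implicitly does.
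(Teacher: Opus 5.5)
Your proof is correct and follows essentially the paper's route (coefficientwise classical Weierstrass approximation of $f_0,\dots,f_N$, conversion of $\sum_i\hbar^i q_i$ into a quantum polynomial via Corollary~\ref{corollary: quantum-classical-polynomial}, and discarding the tail of the metric), and your cutoff extension of each $f_i$ to $\R^{2n}$ supplies a detail the paper skips. Two small remarks: the head of the metric is bounded by $\sum_{k\le N}2^{-k}\cdot\epsilon/4<\epsilon/2$ rather than $\epsilon/4$, which still gives $d_K(f,p)<\epsilon$; and the interpretive worry you flag is moot, because $x^i\star x^j-x^j\star x^i=\hbar\,\omega^{ij}$ already puts $\hbar$ in the algebra generated by the $x^i$ (and Weyl symmetrization writes each classical monomial as a $\C$-linear combination of $\star$-products of coordinates), although $\C[[\hbar]]$- and $\C[\hbar]$-coefficients are not equivalent and only the latter keeps quantum polynomials polynomial in $\hbar$.
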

\begin{proof}
We first consider the case where  $f\in C^\infty(U)$ is an ordinary smooth function on $U$. We will show that for any positive integer $N>0$, there exists a quantum polynomial $p$, such that 
\begin{equation}\label{equation: distance-K}
d_K(f,p)<\frac{1}{2^N}. 
\end{equation}
First of all, by Theorem \ref{theorem: classical-Weierstrass-theorem}, there exists a classical polynomial $\tilde{p}_0$, such that for any multi-index $I$ with $|I|\leq N+1$ there is
\begin{equation}\label{equation: norm-estimate-approximation}
\sup\bigg\{\bigg|\frac{\partial^{|I|}(f-\tilde{p}_0)}{\partial x^I}(x_0)\bigg|: x_0\in K\bigg\}<\frac{1}{(N+2)\cdot 2^{N+1}}.
\end{equation}
We can quantize $\tilde{p}_0$ as in the proof of Corollary \ref{corollary: quantum-classical-polynomial} to obtain a quantum polynomial $p_0$, whose classical part (i.e., modulor $\hbar$) coincides with $\tilde{p}_0$.

We can write the $\hbar$ power expansion of $f-p_0$ as 
$$
f-p_0=(f-\tilde{p}_0)+\hbar\cdot q_1+\cdots+\hbar^m\cdot q_m,
$$
for some $m>0$. By Lemma \ref{lemma: star-product-quantum-Darboux-coordinates}, $q_1,\cdots, q_m$ are all classical polynomials. By Corollary \ref{corollary: quantum-classical-polynomial}, $\hbar\cdot q_1+\cdots+\hbar^m\cdot q_m$ is itself a quantum polynomial which we denote by $p_1$. There is 
$$
f-p_0-p_1=f-\tilde{p}_0. 
$$
Equation \eqref{equation: norm-estimate-approximation} implies the inequality \eqref{equation: distance-K} by a simple computation. 

Next let  $f=f_0+\hbar\cdot f_1+\cdots\in C^\infty(U)[[\hbar]]$ be a general formal smooth function on $U$.  By the argument in the previous part, we can find for every $0\leq m\leq N+1$ a quantum polynomial $q_m$ such that 
$$
||\hbar^m\cdot f_m-q_m||_{K,\hbar,l}<\frac{1}{(N+2)\cdot 2^{N+1}}.
$$
for any $l\leq N+1$. 
By the triangle inequality for semi-norms, there is
\begin{align*}
	||f-\sum_{i=0}^{N+1}q_i||_{K,\hbar,l}&\leq \sum_{i=0}^{N+1}||\hbar^i\cdot f_i-q_i||_{K,\hbar, l}+\sum_{i>N+1}||\hbar^i\cdot f_i||_{K,\hbar,l}\\
	&=\sum_{i=0}^{N+1}||\hbar^i\cdot f_i-q_i||_{K,\hbar, l}\\
	&\leq \frac{N+2}{(N+2)\cdot 2^{N+1}}=\frac{1}{2^{N+1}}.
\end{align*}
In the middle equality we have used the fact that $||\hbar^i\cdot f||_{\hbar, l}=0$ if $i>N+1\geq l$.  
\begin{align*}
	d_K(f,\sum_{i=0}^{N+1}q_i)=&\sum_{l=0}^\infty\frac{1}{2^l}\cdot\frac{||f-\sum_{i=0}^{N+1}q_i||_{K,\hbar,l}}{1+||f-\sum_{i=0}^{N+1}q_i||_{K,\hbar,l}}\\
	\leq&\sum_{l=0}^{N+1}\frac{1}{2^l}\cdot\frac{||f-\sum_{i=0}^{N+1}q_i||_{K,\hbar,l}}{1+||f-\sum_{i=0}^{N+1}q_i||_{K,\hbar,l}}+\frac{1}{2^{N+1}}\\
	\leq& \frac{1}{2^{N+1}}\cdot (N+2)+\frac{1}{2^{N+1}}. 
\end{align*}
Since the last line goes to $0$ as $N\rightarrow\infty$, we obtain the desired approximation by quantum polynomials. 
\end{proof}
\begin{rmk}
	In a forthcoming paper, we will consider a similar notion of quantizable functions on K\"ahler manifolds with a quantum Weierstrass Approximation theorem, and its relation with geometric quantization. 
\end{rmk}

\subsection{Some computation in Fedosov quantization}\label{subsection: estimates-Fedosov-quantization}
\

In this subsection, we will give some estimates on the flat section associated to any smooth function. These estimates will be useful in the proof of the contunuity of the trace map in the next section. 

\begin{lem}\label{lemma: weight-I-Fedosov-connection}
	The term $I$ in the Fedosov connection live in $(\W_{X,\R}[[\hbar]])_{\geq 3}$, and the operator 
	$$
	\frac{1}{\hbar}[I,-]_{\star_{MW}}
	$$
	increases the weight by at least $1$. 
\end{lem}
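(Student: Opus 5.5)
The plan is to recall how Fedosov constructs $I$ and then argue by induction on weight. In \cite{Fed1994}, $\gamma$ is written as $\gamma=\omega_{ij}dx^i\otimes y^j+I$ (up to the normalization fixed in equation \eqref{equation: Fedosov-connection}). The correction $I$ is the unique solution of the recursive equation
$$
I=\delta^{-1}\Big(R+\nabla I+\frac{1}{\hbar}I\star_{MW}I\Big),
$$
subject to the normalization $\delta^{-1}I=0$. Here $R=\frac14 R_{ijkl}y^iy^j dx^k\wedge dx^l$ is the curvature term of the symplectic connection $\nabla$, and has weight $2$. Since $\delta^{-1}$ raises the weight by exactly $1$, the leading term $\delta^{-1}R$ has weight $3$.

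For the first claim I will show by induction on $k\geq 3$ that the weight-$k$ component $I^{(k)}$ of $I$ is determined by $R$ and by $I^{(3)},\dots,I^{(k-1)}$.
\begin{itemize}
\item The operator $\nabla$ preserves weight, so $\nabla I$ contributes $\delta^{-1}\nabla I^{(k-1)}$ in weight $k$.
\item Since $\star_{MW}$ is compatible with the weight and $\frac1\hbar$ has weight $-2$, the term $\frac1\hbar I\star_{MW} I$ contributes in weight $k$ only through $\frac1\hbar I^{(a)}\star_{MW}I^{(b)}$ with $a+b-2+1=k$. If all $a,b\geq 3$, then $a+b\geq 6$, so this contribution lands in weight $\geq 5$.
\end{itemize}
Consequently the iteration starting from $\delta^{-1}R$ never produces terms of weight below $3$, so $I\in(\W_{X,\R}[[\hbar]])_{\geq 3}$.

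For the second claim, take homogeneous elements $a$ of weight $p$ and $I^{(k)}$ of weight $k\geq 3$. Expanding $\star_{MW}$ as in equation \eqref{equation: Moyal-Weyl-product}, the $r$-th term of $I^{(k)}\star_{MW}a$ carries $\hbar^r$ and removes $r$ copies of $y$ from each factor. Its weight is therefore $k+p+2r-2r=k+p$.

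In the commutator $[I^{(k)},a]_{\star_{MW}}$ the $r=0$ terms cancel, because the classical product is graded-commutative. So the bracket is divisible by $\hbar$, and $\frac1\hbar[I^{(k)},a]_{\star_{MW}}$ is a genuine element of the formal Weyl bundle. Its weight is $k+p-2\geq p+1$. Summing over $k\geq 3$, the operator $\frac1\hbar[I,-]_{\star_{MW}}$ increases the weight by at least $1$.

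The main point needing care is the base step: checking that Fedosov's recursion genuinely starts in weight $3$. This means the weight-$\leq 2$ part of $\gamma$ is exactly the $\delta$-term, with no extra weight-$2$ piece such as an $\hbar\,\Omega$-type central term. Any scalar $\hbar$-term in $\gamma$ is central, so it can be absorbed without affecting the bracket. Once that is settled, the rest is direct weight bookkeeping.
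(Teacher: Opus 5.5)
Your proposal is correct and takes the paper's approach: the paper cites Fedosov's construction for $I\in(\W_{X,\R}[[\hbar]])_{\geq 3}$ and the weight-compatibility of $\star_{MW}$ for the second claim, and your recursion together with the weight bookkeeping spells out exactly those two steps. One refinement: if the Fedosov class is non-trivial, the recursion also contains central terms $\delta^{-1}(\hbar^k\omega_k)$ with $k\geq 1$, each of weight $2k+1\geq 3$, so your base step still goes through (and it is the normalization $\delta^{-1}I=0$, not centrality, that rules out $y$-free pieces of $I$).
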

\begin{proof}
	The first statement follows from the construction of the Fedosov connection. The first statement, together with the fact that the fiberwise Moyal-Weyl product preserves the weight implies the second statement. 
\end{proof}


Now we explain how the flat section $O_f$ depends on the jets of  $f$:
\begin{prop}\label{lemma: coefficients-flat-section}
	Let $f\in C^\infty(X)$ denote a smooth function on a compact  symplectic manifold $X$. Let $O_f$ denote the associated flat section of the Weyl bundle under the Fedosov connection.  For every $m\geq 0$, there exists a constant $\tilde{C}_m$ (independent of the function $f$),  such that the coefficients of all the terms in $(O_f)_m$ (in a chosen Darboux coordinate chart) is bounded above by
	\begin{equation}\label{equation: estimate-flat-section}
		\tilde{C}_m\cdot ||f||_{m}. 
	\end{equation}
	Explicitly, the constant $\tilde{C}_m$ depend on the suprema of the coefficients of the Fedosov connections (including the Christoffel symbols, curvature and the Fedosov cohomology class) and their derivatives 
\end{prop}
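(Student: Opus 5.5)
We will prove the estimate by solving the flatness equation $DO_f=0$ recursively in the weight, following Fedosov's iteration. Write the Fedosov connection as $D=\nabla-\delta+\frac{1}{\hbar}[I,-]_{\star_{MW}}$, and split $O_f=\sum_{m\geq 0}(O_f)_m$ into its weight-$m$ components, where $(O_f)_0=f$. The standard argument, using $\delta^{-1}\sigma$-normalization, shows that $O_f$ is the unique solution of
$$
O_f=f+\delta^{-1}\Big(\nabla O_f+\frac{1}{\hbar}[I,O_f]_{\star_{MW}}\Big).
$$
We take this fixed-point formula as the starting point and read off weights on both sides.

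The key structural input is the weight bookkeeping. The operator $\delta^{-1}$ raises the weight by exactly $1$, and $\nabla$ preserves the weight, since in a Darboux chart it is $d+$ (Christoffel terms linear in $y$ and $\partial_y$). By Lemma \ref{lemma: weight-I-Fedosov-connection}, $\frac{1}{\hbar}[I,-]_{\star_{MW}}$ raises the weight by at least $1$. Comparing weight-$m$ components therefore gives
$$
(O_f)_m=\delta^{-1}\Big(\nabla (O_f)_{m-1}+\sum_{j\geq 1}\frac{1}{\hbar}\big[I_{(j+1)},(O_f)_{m-1-j}\big]_{\star_{MW}}\Big),
$$
where $I_{(r)}$ denotes the weight-$r$ part of $I$. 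The right-hand side involves only components of weight $<m$, so we can argue by induction on $m$.

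The inductive claim we aim for is slightly stronger than \eqref{equation: estimate-flat-section}, so that derivatives can be controlled. Namely, for each $m$ and $k$, the coefficients of $(O_f)_m$, together with their $x$-derivatives of order $\leq k$, are bounded on each chart by $\tilde C_{m,k}\|f\|_{m+k}$. The case $m=0$ is just the definition of $\|f\|_k$. For the inductive step, $\nabla(O_f)_{m-1}$ differentiates the coefficients of $(O_f)_{m-1}$ once in $x$ and multiplies by Christoffel symbols. Its coefficients with $k$ further derivatives are thus bounded by a constant, depending on the suprema of $\Gamma$ and its derivatives up to order $k$, times $\tilde C_{m-1,k+1}\|f\|_{m+k}$. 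The bracket terms involve no $x$-derivatives of $(O_f)_{m-1-j}$, only finitely many $\partial_y$'s, because only finitely many contractions are possible at fixed weight. Their coefficients are bounded by the suprema of the coefficients of $I_{(j+1)}$ and their derivatives, times $\tilde C_{m-1-j,k}\|f\|_{m-1-j+k}\leq \tilde C\|f\|_{m+k}$, using that the semi-norms increase. Finally, $\delta^{-1}$ only rescales monomials by $1/(p+q)$, so it does not increase bounds. Setting $k=0$ gives the statement.

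We expect the main obstacle to be bookkeeping rather than anything conceptual. Several points need care. First, the constants must be finite: $I$ is itself an infinite sum, but only finitely many weight components $I_{(r)}$ with $r\leq m+1$ enter at weight $m$. Second, the number of monomials in each weight is finite, and compactness of $X$, through the finite Darboux cover of Remark \ref{remark: estimate-semi-norms-local}, makes all suprema of the connection coefficients finite. Third, we must justify that the components of $I$ are controlled by the curvature, the Christoffel symbols and the Fedosov class. For this we would appeal to the same recursive construction of $I$ from Fedosov's equation, $I=\delta^{-1}(R+\nabla I+\frac{1}{\hbar}I\star_{MW} I)+\ldots$, which expresses each $I_{(r)}$ through finitely many derivatives of these data.
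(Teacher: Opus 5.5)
Your proposal is correct and is essentially the paper's argument: the same fixed-point formula $O_f=f+\delta^{-1}\big(\nabla O_f+\frac{1}{\hbar}[I,O_f]_{\star_{MW}}\big)$, the same weight-by-weight recursion, and the same induction on $m$. Your strengthened hypothesis, bounding the $x$-derivatives of the coefficients of $(O_f)_m$ by $\tilde C_{m,k}\|f\|_{m+k}$, is a quantitative form of the paper's claim that $(O_f)_m$ involves only derivatives of $f$ of order at most $m$; one harmless indexing slip is that $\frac{1}{\hbar}[I_{(r)},-]_{\star_{MW}}$ raises the weight by exactly $r-2$, so the term paired with $(O_f)_{m-1-j}$ is $I_{(j+2)}$ rather than $I_{(j+1)}$ (this is what makes your later range $r\leq m+1$ consistent).
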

\begin{proof}
	Here we will do a local computation in Darboux coordinates. First of all, there is $(O_f)_0=f$, and it is shown in \cite{Fed1994} that there is the following explicit iterative formula for the flat section $O_f$:
	\begin{equation}\label{equation: iterative-formula-flat-section}
		O_f=f+\delta^{-1}\left(\nabla O_f+\frac{1}{\hbar}[I,O_f]_{\star_{MW}}\right). 
	\end{equation}
	Using the fact that the fiberwise Moyal-Weyl product $\star_{MW}$ preserves the  weight on $\W_{X,\mathbb{R}}[[\hbar]]$ and that $\delta^{-1}$ increases the weight by $1$, it is easy to see that for any $m\geq 1$, there is the following formula for the weight $m$ component $(O_f)_m$:
	\begin{equation}\label{equation: weight-components-flat-section}
		\begin{aligned}
			(O_f)_m=&\delta^{-1}\left(\nabla{(O_f)}+\frac{1}{\hbar}[I,O_f]_{\star_{MW}}\right)_{m-1}\\
			=&\delta^{-1}\left(\nabla{(O_f)_{m-1}}+\frac{1}{\hbar}[I,(O_f)_{\leq m-2}]_{\star_{MW}}\right)_{m-1}
		\end{aligned}
	\end{equation}
	Here we have used the fact that the bracket  $\frac{1}{\hbar}[I,-]_{\star_{MW}}$ increses the weight by at least $1$ in Lemma \ref{lemma: weight-I-Fedosov-connection}. 
	To show the statement about number of derivatives of $f$, we can do induction on the weight $m$. For $m=0$, this is trivial since $(O_f)_0=f$. Suppose $(O_f)_{m-1}$ contains at most $m-1$ partial derivatives of $f$, then equation \eqref{equation: weight-components-flat-section} implies the same statement for $(O_f)_m$. 
	
	
	It is then clear that each weight component $(O_f)_m$ involves the following data: partial derivatives of $f$ of order $\leq m$, and contraction with terms of $I$ in Fedosov connection of weight $\leq m$. Then this lemma follows from the following facts:
	\begin{enumerate}
		\item The monomials in $I$ of weight $m$ is bounded above by $C_m$.
		\item  Since we are choosing Darboux coordinates, the contractions between terms in the Weyl bundle are bounded above by $1$. Thus the estimate of the coefficients of $I$ and $O_f$ is enough. 
	\end{enumerate}
\end{proof}

\section{Continuity of the trace map}\label{section: continuity-trace-map}

In this section, we will let $X$ denote a {\em compact} symplectic manifold. We begin with the definition of the trace map of deformation quantization. 
\begin{defn}\label{definition: trace-map}
	Let $(C^\infty(X)[[\hbar]],\star)$ denote a formal deformation quantization of a symplectic manifold. A {\em trace} of this quantum algebra is a map:
	\begin{equation}\label{equation: trace-map}
	\Tr: C^\infty(X)[[\hbar]]\rightarrow \mathbb{C}((\hbar)),
	\end{equation}
	which satisfies the following two conditions:
	\begin{itemize}
		\item $\Tr(f)=\frac{(-1)^n}{\hbar^n}\left(\int_X f\cdot\omega^n+O(\hbar)\right)$;
		\item $\Tr(f\star g)=\Tr(g\star f)$.
	\end{itemize}
\end{defn}
It is shown in \cite{Nest-Tsygan-1995} that there exists a unique trace for each formal deformation quatization on a symplectic manifold.
There have been constructions of trace in deformation quantization via different methods by  Fedosov in \cite{Fed1994} and Nest-Tsygan in \cite{Nest-Tsygan-1995} respectively.

We will consider a “renormalized” trace for (formal) functions $C^\infty(X)[[\hbar]]$
$$
\widetilde{\Tr}:=(-\hbar)^n\cdot\Tr,
$$
 whose values live in $\mathbb{C}[[\hbar]]$ that also allows a Fr\'echet topology (Example \ref{example: sequence-complex-numbers} and \ref{example: C-hbar-Frechet-algebra}). It is then natural to ask if the trace map is {\em continous}. In this section, we will give a positive answer to this question:
\begin{thm}\label{theorem: continuity-trace-map}
The renormalized trace map $\widetilde{\text{Tr}}$ for a deformation quantization $(C^\infty(X)[[\hbar]],\star)$ in equation \eqref{equation: trace-map} is continuous with respect to the Fr\'echet topologies on both sides. 
\end{thm}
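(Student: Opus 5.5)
The plan is to prove continuity by a local-in-$\hbar$ estimate. The map $\widetilde{\Tr}$ is $\C$-linear, and the semi-norms on both sides are increasing. So it suffices to show that for every $k\geq 0$ there exist a constant $C_k$ and an integer $m(k)$ with
$$
\|\widetilde{\Tr}(f)\|_k\leq C_k\cdot \|f\|_{\hbar,m(k)}
$$
for all $f\in C^\infty(X)[[\hbar]]$. Here $\|\cdot\|_k$ on $\C[[\hbar]]$ is the max of the absolute values of the first $k+1$ coefficients, as in Example \ref{example: sequence-complex-numbers}.

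The first step is to write $\widetilde{\Tr}(f)=\sum_{j\geq 0}\hbar^j\tau_j(f)$. Uniqueness of the trace (Nest--Tsygan) lets us use any explicit construction; we take the Batalin--Vilkovisky/Feynman-graph formula of \cite{GraLiLi2017}, or equivalently Fedosov's local formula. In that formula, $\widetilde{\Tr}(f)=\int_X\sigma$-type integrals of a top form. At each point $x_0$ this form is a universal polynomial expression, given by finitely many graphs at each order in $\hbar$. Its inputs are the components of the flat section $O_f$ and the Fedosov connection data (curvature, its covariant derivatives, the Weyl curvature class).

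The key structural claim is a weight count. The $\hbar^j$ coefficient of $\widetilde{\Tr}(f)$, i.e.\ $\tau_j(f)$, involves only:
\begin{itemize}
\item the components $f_i$ with $i\leq j$;
\item components of $O_{f_i}$ of weight $\leq 2(j-i)$, by the weight grading $|\hbar|=2$, $|y|=1$.
\end{itemize}
Consequently $\tau_j(f)$ depends on at most $2(j-i)$ derivatives of $f_i$. This mirrors Remark \ref{remark: semi-norm-depends-on-truncation-formal-function}.

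Granting this, we apply Proposition \ref{lemma: coefficients-flat-section} on each chart of the fixed finite Darboux cover (Remark \ref{remark: estimate-semi-norms-local}). The coefficients of $(O_{f_i})_{w}$ are bounded by $\tilde C_w\|f_i\|_w$. We then integrate over the compact $X$, whose volume is finite, with a partition of unity subordinate to the cover. The graph weights at order $j$ are finitely many numbers, and the connection data are bounded. This gives
$$
|\tau_j(f)|\leq C'_j\sum_{i\leq j}\|f_i\|_{2(j-i)}\leq C'_j\,\|f\|_{\hbar,2j}.
$$
The last inequality uses $\|f_i\|_{2(j-i)}\leq \|f_i\|_{2j-i}$, which appears as a summand in $\|f\|_{\hbar,2j}$. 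Taking the maximum over $j\leq k$ yields the estimate with $m(k)=2k$.

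I expect the main obstacle to be the structural claim about the trace formula: showing rigorously that the $\hbar^j$ coefficient involves only finitely many Feynman graphs, each consuming bounded weight from $O_f$. The $\hbar^{-n}$ normalization and the Berezin-type top-degree extraction in the $y$'s must be checked to shift weights consistently. I would first verify this for the leading term, $\tau_0(f)=\int_X f_0\,\omega^n/n!$ up to sign, and then argue via the weight grading. Fiberwise integration (the Berezin/trace density pairing) is weight-homogeneous, and each propagator or vertex carries definite $\hbar$ and $y$-weight. Hence a fixed total $\hbar$-order bounds the $y$-degree extracted from $O_f$.
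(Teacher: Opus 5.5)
Your proposal is correct and is essentially the paper's argument: the BV Feynman-graph formula for the trace density, an $\hbar$-power count showing that only low-weight components of $O_f$ reach the $\hbar^l$ coefficient, then Proposition \ref{lemma: coefficients-flat-section} and integration over the compact $X$. In the paper the count runs as follows: form degree forces exactly $2n$ green vertices, so $|E(\mathcal{G})|=l-p-g$, and the yellow vertex's valency $b$ is at most $|E(\mathcal{G})|$; this gives $g+b\le l$, i.e. weight $2g+b\le 2l$, which is exactly your bound $2(j-i)$ and your $m(k)=2k$ (the paper states the sharper $m(k)=k$), and any fixed loss of derivatives suffices for continuity.
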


The main technique of the proof  of Theorem \ref{theorem: continuity-trace-map} is to find an appropriate cocycle in the de Rham cohomology whose integral over $X$ gives $\Tr(f)$. Here we will apply 
the Batalin-Vilkovisky (BV) quantization method for deformation quantization developed in \cite{GraLiLi2017} which gives an explicit formula for the trace map as the correlation function of the associated {\em local quantum observable} $O_f$ in a one-dimensional quantum sigma model. 
\begin{align*}
\Tr(f)=\langle O_f\rangle=&\int_X \alpha_f\cdot\left(\frac{\omega}{\hbar}\right)^n\\
=&\int_X \left(\alpha_{f,0}+\hbar\cdot \alpha_{f,1}+\hbar^2\cdot\alpha_{f,2}+\cdots\right)\cdot\omega^n\in\C[[\hbar]].
\end{align*}
We call the integrand $\alpha_f\cdot\left(\frac{\omega}{\hbar}\right)^n$ a {\em trace density} for $f$, and there is the following analysis of the trace density which implies Theorem \ref{theorem: continuity-trace-map}:
\begin{prop}\label{proposition: integrand-locality}
	Let $f\in C^\infty(X)$ be an (ordinary) smooth function on $X$.  For every $l\geq 0$ and every $x_0\in X$, $\alpha_{f,l}(x_0)$ depends on the jets of $f$ at $x_0$ of order at most $l$.  
	Equivalently, the assignment 
	$$
	f\mapsto \alpha_{f,l}
	$$
	is a differential operator of order at most $l$. In particular, there is the following inequality: for any $l\geq 0$, there exists a constant $C_l$ such that for any $f$, there is 
	$$
	\sup\{|\alpha_{f,l}(x_0)|: x_0\in X\}\leq C_l\cdot ||f||_{\hbar, l}.  
	$$
\end{prop}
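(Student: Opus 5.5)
We plan to use the explicit description of the trace density from the BV quantization of \cite{GraLiLi2017}. There the correlation function $\langle O_f\rangle$ is computed fiberwise. At each $x_0\in X$ the top-form component of $\alpha_f\cdot(\omega/\hbar)^n$ is a finite sum over Feynman graphs, for each fixed $\hbar$-order. Every graph has exactly one vertex decorated by the flat section $O_f$, which after pulling back by the exponential map is $O_f$ restricted to the fiber $\W_{x_0}$. The remaining vertices carry the Fedosov data $\gamma$ (Christoffel symbols, curvature and its covariant derivatives, the Fedosov class), and the edges are the propagator on $S^1$. Each graph weight is an integral over a configuration space of $S^1$, so it is a universal constant. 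Hence $\alpha_f(x_0)$ is a finite universal polynomial expression in the Taylor coefficients of $(O_f)(x_0)$ in the $y^i$ and in the coefficients of the connection at $x_0$. Linearity in $f$ is clear because $O_f$ appears at exactly one vertex.

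The key step is weight counting. Assign $|\hbar|=2$ and $|y^i|=1$ as in the weight filtration. The Wick contraction defining a graph weight pairs the $y$'s at the vertices through propagators. Each propagator carries a factor $\hbar$, and each interaction vertex from $\frac{1}{\hbar}\gamma$ carries a factor $\hbar^{-1}$. We will check that the total $\hbar$-power of a graph's contribution, after factoring out $(\omega/\hbar)^n$, equals half the total weight of the inputs. The inputs are the weight-$k$ component $(O_f)_k$ at the $f$-vertex, plus the weights of the $\gamma$-insertions. Since $I\in(\W_{X,\R}[[\hbar]])_{\geq 3}$ by Lemma \ref{lemma: weight-I-Fedosov-connection}, $\frac1\hbar[I,-]$ raises weight by at least one. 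Likewise each $\gamma$-vertex contributes nonnegative excess weight. Consequently a term of order $\hbar^l$ in $\alpha_f$ can only involve $(O_f)_k$ with $k\le 2l$. The factor $2$ must be checked carefully; I expect only $k\le l$ to survive, or else the bound $l$ should be read through the $\hbar$-grading of $(O_f)$. In particular, $y$-monomials of degree $p$ in $(O_f)$ come with $\hbar^{p/2}$ after full contraction, and the derivative count below is what matters.

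Next we combine this with Proposition \ref{lemma: coefficients-flat-section}. The weight-$k$ component $(O_f)_k$ involves partial derivatives of $f$ of order at most $k$, with coefficients bounded by $\tilde C_k\|f\|_k$ uniformly over a finite Darboux cover, using compactness of $X$ and Remark \ref{remark: estimate-semi-norms-local}. For the derivative count, the recursion \eqref{equation: weight-components-flat-section} shows that each derivative of $f$ costs one unit of $y$-weight. Full contraction of $y$-degree $p$ produces $\hbar^{\lceil p/2\rceil}$ or more. So a derivative of $f$ of order $j$ first appears at $\hbar$-order at least $j/2\le l$, which yields the claim that $f\mapsto\alpha_{f,l}$ is a differential operator of order at most $l$. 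The coefficients of this operator are smooth, built from the connection data and universal graph weights. Bounding them on compact $X$ by their suprema gives
\[
\sup_{x_0\in X}|\alpha_{f,l}(x_0)|\le C_l\,\|f\|_l\le C_l\,\|f\|_{\hbar,l},
\]
where the last step uses $\|f\|_l=\|f_0\|_l\le\|f\|_{\hbar,l}$ from the definition of the semi-norms with $f=f_0$.

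The main obstacle is making the $\hbar$/weight bookkeeping in the Feynman graph expansion rigorous. This means showing that only finitely many graphs contribute at each fixed $\hbar$-order, and confirming that a term of $\hbar$-order $l$ contains at most $l$ derivatives of $f$. Finiteness should follow from the strict positivity of the weight excess of $\gamma$-vertices beyond $\delta$, together with the nonnegative $\hbar$-cost of each loop. The derivative count should follow from the fact that each $y$ at the $f$-vertex must be contracted. I would first verify the counting on the flat model of Example \ref{example: Fedosov-connection-flat-spaces}, where $O_f$ is the Taylor series, the trace density is a Gaussian-type contraction, and $\alpha_{f,l}$ is visibly a constant multiple of $\Delta^l f$-type expressions. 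I would then treat the curvature vertices as weight-raising perturbations.
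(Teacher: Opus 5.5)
There is a genuine gap, and it is the factor of $2$ you flagged but did not resolve. Your final inference, that a derivative of order $j$ ``first appears at $\hbar$-order at least $j/2\le l$, which yields \dots order at most $l$'', is a non sequitur. It only gives $j\le 2l$: a differential operator of order $\le 2l$ and a bound by $\|f\|_{2l}$. That would still suffice for continuity of $\widetilde{\Tr}$, but not for the proposition as stated.

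Global weight counting cannot repair this. The green vertices coming from the $\delta$-part of $\gamma$ have weight $-1$, and there are $2n$ green vertices. So weight conservation only yields $k+2g\le 2l$ for the $y^k\hbar^g$-component of $O_f$ at the yellow vertex. Your $\hbar^{\lceil p/2\rceil}$ count assumes that $y$'s at the $O_f$-vertex can be contracted with each other, as in a Gaussian integral with symmetric covariance. That is also why you expect $\Delta^l f$ terms in the flat model.

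The missing idea is that such self-contractions do not occur. The propagator $\hbar\partial_P$ is attached to two-point subsets $\{\alpha_1,\alpha_2\}$, i.e.\ to two distinct vertices. In any case a self-loop would apply $\omega^{ij}\partial_{y^i}\partial_{y^j}$, which kills symmetric tensors. Since no black tails survive $\sigma$, every boson at the yellow vertex needs its own internal edge to a green vertex, so the yellow valency satisfies $k\le|E(\mathcal G)|$.

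Next, Lemma~\ref{lemma: number-of-vertices} says there are exactly $2n$ green vertices, forced by form degree and fermion degree. This makes the $\hbar$-count exact:
\[
l-n=(p-2n)+g+|E(\mathcal G)|+n,
\qquad\text{so}\qquad
|E(\mathcal G)|=l-p-g .
\]
Together these give $k+g\le l$; this is the paper's argument.

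To finish, you need the correct derivative count for $O_f$, which is not ``one derivative per $y$''. In the Fedosov recursion, the $\delta^{-1}\nabla$ steps raise $k$ by one and add at most one derivative of $f$. The $\delta^{-1}\frac1\hbar[I,-]$ steps can trade $y$'s for powers of $\hbar$, but never decrease $k+g$ and never add derivatives. Hence the $y^k\hbar^g$-component involves derivatives of order at most $k+g\le l$.

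The same fixed count of $2n$ green vertices is also what gives finiteness at each order; you do not need a weight-excess argument for that. As a sanity check, in the flat model of Example~\ref{example: Fedosov-connection-flat-spaces} the green vertices carry no bosons. So $k=0$ and $\alpha_f=f$, with no $\Delta^l f$ terms: there is no Laplacian here, and $\omega^{ij}\partial_i\partial_j f=0$.
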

\begin{proof} (of Theorem \ref{theorem: continuity-trace-map})
For any $l\geq 0$, there is the following simple inequality:
\begin{align*}
\bigg|\int_X \alpha_{f,l}\cdot\omega^n\bigg|&\leq \sup\{|\alpha_{f,l}(x_0)|: x_0\in X\}\cdot Vol_X\\
&\leq C_l\cdot ||f||_{\hbar, l}\cdot Vol_X.
\end{align*}
Here $Vol_X$ denotes the volume of $X$ under the Liouville measure. The second inequality follows from Proposition \ref{proposition: integrand-locality}. The continuity of the trace map  then follows from this comparison between semi-norms on both sides. 
\end{proof}
The proof of Proposition \ref{proposition: integrand-locality} is based on an explicit Feynman graph computation, which is an advantage of the Batalin-Vilkovisky quantization method.  In \cite{FFS1995}, an explicit cocycle for the trace is explicitly given via Hochchild chains and is essentially equivalent to the one we use here via the BV quantization method, but less convenient for the computation. 

\subsection{Trace as the correlation functions}
\

In this subsection, we prove Proposition \ref{proposition: integrand-locality} by  giving  an explicit expression of the trace density $\alpha_f\cdot\left(\frac{\omega}{\hbar}\right)^n$ in terms of Feynman graph expansions and explain its background in physics.  For more details on the underlying one-dimensional Chern-Simons model and its quantization, we refer to \cite{GraLiLi2017}.

\subsubsection{The geometry of BV bundle and quantization}
\



The BV quantization of the one-dimensional Chern-Simons model with target $X$ is described in terms of the quantum geometry of the {\em BV bundle} on $X$. 
\begin{defn}\label{definition: BV-bundle}
	The BV bundle of a symplectic manifold is defined as follows:
	\begin{equation}\label{equation: BV-bundle}
		\widehat{\Omega}_{TX}^{-\bullet}:=\widehat{\Sym}(T^*X)\otimes\wedge^{-\bullet}T^*X. 
	\end{equation}
	Here $\wedge^{-\bullet}T^*X:=\bigoplus_k\wedge^k T^*X[k]$. In particular, each component $\wedge^kT^*X$ sits in cohomological degree $-k$.  We will also consider the extension of the BV bundle by the differential forms, i.e., $\A_X^{\bullet}\left(\widehat{\Omega}_{TX}^{-\bullet}\right)$.
\end{defn}
In short, we add the fermionic component $\wedge^{-\bullet}T^*X$ to the Weyl bundle to obtain the BV bundle. Equivalently,  $\widehat{\Omega}_{TX}^{-\bullet}$ can be viewed as the relative de Rham complex for the bundle $TX\rightarrow X$ formally completed at the zero section, with a fiberwise de Rham operator $d_{TM}$ mapping a (local) generator $y^i$ of $\W_{X,\R}$ (see equation \eqref{equation: Weyl-bundle-monomial}) to a generator of $\wedge^{-\bullet}T^*X$ which we denote by $dy^i$. We will call $y^i$'s and $dy^i$'s {\em bosons} and {\em fermions} respectively.

 The symplectic form $\omega$ induces a canonical {\em top degree fermion} on the BV bundle of cohomological degree $-2n$ which we denote by $\widetilde{\omega}^n:=(\omega_{ij}dy^i\wedge dy^j)^n$. There is the following {\em Berezin integral} for sections of the BV bundle denoted by $\int_{Ber}$, which takes the coefficients of the “top degree fermion” in the BV bundle followed by taking the symbol of the Weyl bundle:
\begin{equation}\label{equation: BV-integration}
\int_{Ber}:=\sigma\circ\left(\hbar\cdot\iota_\Pi\right)^n: \A_X^{\bullet}\left(\widehat{\Omega}_{TX}\right)[[\hbar]]\rightarrow \A_X^\bullet[[\hbar]].
\end{equation}
Here $\Pi=\omega^{-1}\in\Gamma(X,\wedge^2 TX)$ denotes the Poisson tensor, and $\iota_\Pi: \widehat{\Omega}_{TX}^{-\bullet}\rightarrow 	\widehat{\Omega}_{TX}^{-\bullet+2}$ denotes the fiberwise contraction. 

For a smooth function $f\in C^\infty(X)$,  the formal smooth function $\alpha_f$ in the {\em trace density} associated to  $f$ can be explicitly written as a Berezin integral:
\begin{equation}\label{equation: alpha-f}
\alpha_f\cdot\omega^n=\int_{Ber}[O_f]_\infty\cdot e^{\gamma_\infty/\hbar}.
\end{equation}
The section  $[O_f]_\infty\cdot e^{\gamma_\infty/\hbar}$ of  the {\em BV} bundle of $X$ is obtained by applying the  {\em Renormalization group flow} operator to $O_f$ in the one-dimensional Chern-Simons model, described in the following explicit formula: 
\begin{equation}\label{equation: RG-flow-quantum-observable}
[O]_\infty e^{\gamma_\infty/\hbar}:=\text{Mult}\int_{S^1[*]}e^{\hbar\partial_P+D}(Od\theta_1)\otimes e^{\otimes\gamma/\hbar}.
\end{equation}
We will briefly describe the notations in equation \eqref{equation: RG-flow-quantum-observable} and refer to section 2.4 of \cite{GraLiLi2017} for a detailed exposition. For each $m\geq 0$, $S^1[m]$ denotes the compactified configuration space of $m$-ordered points on the circle $S^1$ constructed via successive real-oriented blow ups of $(S^1)^m$, and is a manifold with corners.  For instance, when $m=2$, there is the following explicit parametrization of $S^1[2]$ as a cylinder:
\begin{equation}\label{equation: propagator}
S^1[2]=\{(e^{2\pi i\theta},u): 0\leq\theta<1, 0\leq u\leq 1\}.
\end{equation}

We will need the following maps for configuration spaces to understand equation \eqref{equation: RG-flow-quantum-observable}. The first one is the blow down map for all $m\geq 1$:
\begin{equation}\label{equation: blow-down-maps}
\pi_m: S^1[m]\rightarrow (S^1)^m.
\end{equation}
There is also a natural projection for every subset $V\subset\{1,\cdots, m\}$:
\begin{equation}\label{equation: projection-map-configuration-space}
\pi_V: S^1[m]\rightarrow S^1[|V|]. 
\end{equation}

The term $\gamma$ in equation \eqref{equation: RG-flow-quantum-observable} is the same as in the Fedosov connection \eqref{equation: Fedosov-connection}, and the operator $D$ is explicitly given by 
\begin{equation}\label{equation: operator-D}
\begin{aligned}
D: \A^\bullet_{S^1[k]}\otimes_{\mathbb{R}}\A_X^\bullet(\widehat{\Omega}^{-\bullet}_{TX})^{\otimes k}\rightarrow&\A^\bullet_{S^1[k]}\otimes_{\mathbb{R}}\A_X^\bullet(\widehat{\Omega}^{-\bullet}_{TX})^{\otimes k} \\
 D(a_1\otimes\cdots\otimes a_k):=&\sum_{\alpha}\pm d\theta^\alpha\otimes_{\mathbb{R}}(a_1\otimes\cdots\otimes d_{TX}a_\alpha\otimes\cdots\otimes a_k).
\end{aligned}
\end{equation}
Now we finish the explanation of the right hand side of equation \eqref{equation: RG-flow-quantum-observable}. For every $m\geq 1$, we obtain
$$
\left(d\theta_1\otimes d_{TX}(\gamma)\right)\otimes\cdots\otimes\left(d\theta_m\otimes d_{TX}(\gamma)\right)\in \A_{(S^1)^m}^m\otimes\A_X^m\left(\widehat{\Omega}^{-1}_{TX}\right)^{\otimes m}. 
$$
 This will be pulled back to $\A_{S^1[m]}^m\otimes\A_X^m\left(\widehat{\Omega}^{-1}_{TX}\right)^{\otimes m}$ via the blow down map \eqref{equation: blow-down-maps}. (Notice that for every $\alpha\in\{1,\cdots,m\}$ there is a copy of the BV bundle $\widehat{\Omega}^{-\bullet}_{TX}$, the map $\text{Mult}$ takes the natural super commutative product to combine these copies of BV bundle to a single one). The exponential term in equation \eqref{equation: RG-flow-quantum-observable} is the following formal sum:
 $$
 e^{\otimes \gamma/\hbar}=\sum_{m\geq 0}\frac{1}{m!\cdot\hbar^m}\gamma^{\otimes m},\hspace{5mm}\gamma^{\otimes m}\in \A_X^{\bullet}(\widehat{\Omega}_{TX}^{-\bullet})^{\otimes m}.
 $$  
 For every two point subset $\{\alpha_1,\alpha_2\}\subset\{1,\cdots, m\}$, the propagator $\partial_P$ in equation \eqref{equation: RG-flow-quantum-observable} is defined as the following operator on $\A_{(S^1)^m}^m\otimes\left(\A_X^1(\widehat{\Omega}^{-1}_{TX})\right)^{\otimes m}$.  
 $$
 \partial_P:=\pi_{\{\alpha_1,\alpha_2\}}^*\left(u-\frac{1}{2}\right)\left(\omega^{ij}\partial_{y^i}\otimes\partial_{y^j}\right). 
 $$
 Here $\pi_{\{\alpha_1,\alpha_2\}}$ is the projection map in equation \eqref{equation: projection-map-configuration-space}, and the bi-differential $\omega^{ij}\partial_{y^i}\otimes\partial_{y^j}$ acts on the two components of the BV bundle corresponding to $\alpha_1$ and $\alpha_2$ respectively. Finally, the composition of integral over $S^1[m]$ and $\textit{Mult}$ gives rise to a section of the BV bundle $\A_X^{\bullet}\left(\widehat{\Omega}_{TX}\right)((\hbar))$.

 Equation \eqref{equation: RG-flow-quantum-observable} is the model of path integrals  in the one-dimensional Chern-Simons model, which can also be explicitly computed via Feynman graph expansions. This Chern-Simons model is a mathematical description of a sigma model with target the symplectic manifold $X$ and domain $S^1$, which explains the appearance of integrals over (compactified) configuration spaces of $S^1$.
 


\subsubsection{A graphical description of equation \eqref{equation: RG-flow-quantum-observable}}
\

In this subsection, we give an explicit expression of the BV bundle section $[O_f]_\infty\cdot e^{\gamma_\infty/\hbar}$ via Feynman graph expansions. We first recall the definition of graphs here, for more details we refer to \cite{Costello-book} and \cite{GraLiLi2017}.
\begin{defn}
	A graph $\mathcal{G}$ consists of the following data:
	\begin{enumerate}
		\item A finite set of vertices $V(\mathcal{G})$;
		\item A finite set of half-edges $H(\mathcal{G})$;
		\item An involution $\sigma: H(\mathcal{G})\rightarrow H(\mathcal{G})$. The set of fixed points of this map is denoted by $T(\mathcal{G})$ and is called the set of tails of $\mathcal{G}$. The set of two-element orbits is denoted by $E(\mathcal{G})$ and is called the set of internal edges of $\mathcal{G}$;
		\item A map $\pi: H(\mathcal{G})\rightarrow V(\mathcal{G})$ sending a half-edge to the vertex to which it is attached;
		\item A map $g: V(\mathcal{G})\rightarrow\mathbb{Z}_{\geq 0}$ assigning a genus to each vertex. 
	\end{enumerate}
\end{defn}

It is easy to see how to construct a topological space $|\mathcal{G}|$ from the above abstract data. 
A relevant {\em connected} Feynman graph is as in the following picture: 
 \begin{equation}\label{equation: graph}
 \figbox{0.26}{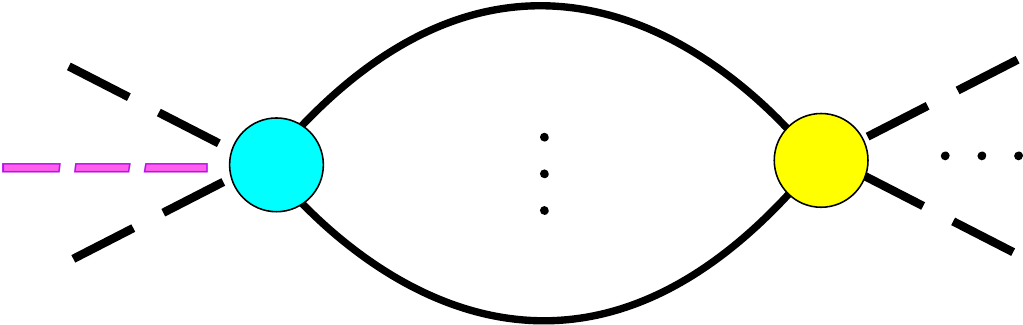}
 \end{equation}

Now we interprete equation \eqref{equation: RG-flow-quantum-observable} as a sum of {\em Feynman weights} of graphs. First of all, we label graphs by objects in quantum geometry by the following rules.
\begin{enumerate}
	\item Exactly one vertex is labeled by $d\theta\otimes O_f$ (of yellow color in graph \eqref{equation: graph}) associated to a smooth function $f$. All the other vertices are labeled by the term $D(\gamma/\hbar)$ in the Fedosov connection (of green color in graph \eqref{equation: graph}). If we write $\gamma$ in the power expansion of $\hbar$ as:
	$$
	\gamma=\gamma_0+\hbar\cdot\gamma_1+\hbar^2\cdot\gamma_2+\cdots.
	$$
	Then each blue vertex of internal genus $k$ and valency $m$ is labeled by the term in $\hbar^k\cdot D(\gamma_k/\hbar)$ of polynomial degree $l$. Similarly, we also have a genus of the (unique) yellow vertex and its label takes the term in $d\theta\otimes O_f$ of the corresponding $\hbar$ power. 
		\item For every vertex $v\in V(\mathcal{G})$, the number of half-edges connecting to it is called the {\em valency} of $v$. To each vertex of valency $k$, we label it with the term in $D(\gamma)$ or $O_f$ of polynomial degree $k$ (in both $\widehat{\Sym}T^*X$ and $\wedge^{-\bullet}T^*X$) in the BV bundle.
	\item There are two types of half-edges that are allowed in our graphs. The black half-edges will be labeled by the boson inputs (generators of $\widehat{\Sym}T^*X$), and the purple half-edges will be labeled by the fermion inputs (generators of $\wedge^{-\bullet}T^*X$). All these half-edges together form sections of $\widehat{\Omega}_{TX}$. In particular, an internal edge can only be the union of two black half-edges. 
	\item The internal edges (solid lines) of this graph are labeled by the propagator $\hbar\cdot\partial_P$.
\end{enumerate}
We summarize the labeling in the following table:
\begin{center}
	\renewcommand{\arraystretch}{1.2}
	\begin{tabular}{|c|c|}
		\hline
		Feynman graphs & Quantum geometry\\
		\hline
		Green vertex & $D(\gamma/\hbar)$\\
		\hline
		Yellow vertex & $d\theta\otimes O_f$\\
		\hline
		Genus of a vertex & Power of $\hbar$\\
		\hline
		Valency of a vertex & Polynomial degree in $\widehat{\Omega}^{-\bullet}_{TX}$\\
		\hline
		Internal edge & Propagator $\hbar\cdot\partial_P$\\
		\hline
		Black half edge & Generator of $\widehat{\Sym}(T^*X)$ (boson)\\
		\hline
		Purple half edge & Generator of $\wedge^{-\bullet}T^*X$ (fermion)\\
		\hline
		Valency of a vertex & Polynomial degree in $\widehat{\Omega}^{-\bullet}_{TX}$\\
		\hline
	\end{tabular}
\end{center}
Next, we describe the Feynman weights of a labeled graph $\mathcal{G}$, which takes value in the BV bundle $\A_X^\bullet\left(\widehat{\Omega}^{-\bullet}_{TX}\right)$. Let $m=|V(\mathcal{G})|$ denote the number of vertices in $\mathcal{G}$.  The Feynman weight of $\mathcal{G}$ is given by contractions on the BV bundle followed by an integral over the configuration space $S^1[m]$.  

Explicitly, for every vertex $v\in V(\mathcal{G})$, it corresponds to exactly one copy of $S^1$ in $(S^1)^m$. (In particular, we let the unique yellow vertex correspond to the first copy and this is the origin of the notation $d\theta_1$ in equation \eqref{equation: RG-flow-quantum-observable}). For the green vertices, the operator $D$ turns $\gamma$ into a $1$-form on the corresponding $S^1$ component. The blow down map
$$
\pi_m: S^1[m]\rightarrow (S^1)^m
$$
allows us to pull back these forms to $S^1[m]$.

The half edges attached to  $v$ are graphical representation of sections of the corresponding copy of the BV bundle via the labeling (either $D(\gamma)$ or $O_f$). Among these half edges, those attached to other vertices to become an internal edge (solid lines) will be contracted with $\hbar\cdot\partial_P$ labeling this edge, and those to become tails (dotted lines) will remain to be generators of BV bundle in the output Feynman weight.  The symbol map $\sigma$ in equation \eqref{equation: BV-integration} is then equivalent to picking those graphs without any black tails. Then the integration $\int_{S^1[m]}$ gives a section of the BV bundle $\A_X^{\bullet}\left(\widehat{\Omega}_{TX}^{-\bullet}\right)((\hbar))$.



\subsection{The proof of Proposition \ref{proposition: integrand-locality}}
\

There are the following simple observations for each green vertex (labeled by $D(\gamma/\hbar)$): it is a $1$-form on the target $X$, and the operator $D$ in equation \eqref{equation: operator-D} has two effects: on one hand, it turns one $y^i$ in $\gamma/\hbar$ to $dy^i$ (i.e., the fiberwise de Rham differential). And on the other hand, it contributes the canonical volume form on the corresponding $S^1$.  For the unique yellow vertex, it is a $0$-form on the target and does not contain the $\wedge^{-\bullet}T^*X$ component of the BV bundle.  In the following lemma, we will describe those labeled graphs whose Feynman weight will contribute non-trivially after the BV integration $\int_{Ber}$ (we call these graphs {\em admissible}):
\begin{lem}\label{lemma: number-of-vertices}
The trace density  $\alpha_f\cdot\left(\frac{\omega}{\hbar}\right)^n$ associated to a function $f$ in equation \eqref{equation: alpha-f} is a linear combination of the weights of (not necessarily connected) Feynman graphs with exactly $1$ yellow vertex (labeled by $O_f$) and $2n$ green vertices (labeled by $D(\gamma/\hbar)$). 
\end{lem}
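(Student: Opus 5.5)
The plan is a degree count: track the form degree on $X$, the form degree on the configuration space, and the fermion degree through equation \eqref{equation: RG-flow-quantum-observable}, and show that only graphs with exactly $2n$ green vertices survive the operations producing $\alpha_f\cdot\omega^n$. Fix a labeled graph $\mathcal{G}$ with one yellow vertex and $k$ green vertices. Its Feynman weight is the integral over $S^1[k+1]$ of a product of forms, followed by $\int_{Ber}$. I will record what each ingredient contributes.

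First I tally the degrees vertex by vertex. By the observations preceding the lemma, each green vertex carries $D(\gamma/\hbar)$. This is a $1$-form on $X$, since $\gamma$ is a $1$-form-valued section of the Weyl bundle. It is also a $1$-form $d\theta^\alpha$ on its copy of $S^1$, and it carries exactly one fermion $dy^i$, since $D$ converts exactly one boson $y^i$ into $dy^i$. The yellow vertex $d\theta_1\otimes O_f$ is a $0$-form on $X$, a $1$-form on the first $S^1$, and has no fermions. The propagator $\hbar\,\partial_P$ is a $0$-form on $S^1[m]$: it is the pullback of the function $u-\tfrac12$. It acts only on bosons, so it changes neither the fermion count nor the form degrees. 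Hence, before integration, the weight of $\mathcal{G}$ is a form of degree $k+1$ on $(S^1)^{k+1}$ (pulled back to $S^1[k+1]$), of degree $k$ on $X$, and of fermion degree $k$.

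Next I impose the three selection rules. The integral $\int_{S^1[k+1]}$ is nonzero only on the top-degree component, and $\dim S^1[k+1]=k+1$, so this rule is automatically satisfied by the degree just computed. The Berezin integral $\int_{Ber}=\sigma\circ(\hbar\,\iota_\Pi)^n$ extracts the coefficient of the top fermion $\widetilde{\omega}^n$, which has fermion degree $2n$. It therefore kills every weight whose fermion degree is not $2n$, which forces $k=2n$. As a consistency check, the output is then a $2n$-form on $X$, matching $\alpha_f\cdot\omega^n$. The symbol map $\sigma$ removes graphs with black tails. This does not affect the vertex count, but it shows that the remaining bosons all end on internal edges.

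Finally, I expand $e^{\hbar\partial_P+D}$ and $e^{\otimes\gamma/\hbar}$ into sums over graphs, as in the graphical description. In particular, $D$ must act exactly once on each $\gamma$ factor. A factor of $\gamma$ on which $D$ does not act contributes no $d\theta$ on its copy of $S^1$, so the form on $S^1[m]$ falls short of top degree and the integral vanishes. A factor on which $D$ acts twice contributes $d\theta\wedge d\theta=0$. So every surviving term is labeled by $D(\gamma/\hbar)$ exactly as stated, and the trace density is a linear combination of weights of graphs with one yellow and $2n$ green vertices. These graphs need not be connected, since nothing forces edges between the vertices.

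The main obstacle is the bookkeeping in this last step: justifying that exactly one $D$ per vertex survives, and that the propagator has configuration-space degree $0$. The paper's formula defines the propagator as the function $(u-\tfrac12)$; if instead it carried a $1$-form part, the degree count on $S^1[m]$ would have to be redone. I would settle this by appealing directly to the explicit definition of $\partial_P$ given in the paper.
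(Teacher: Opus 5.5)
Your proposal is correct and follows the paper's degree-counting argument: each green vertex contributes one form degree on $X$ and one fermion, the yellow vertex contributes neither, and the Berezin integral (together with the top-degree requirement on $X$) forces exactly $2n$ green vertices. Your extra check that $D$ must act exactly once on each $\gamma$ factor is also sound; it uses the top-degree requirement on $S^1[m]$ and the fact that the propagator is a $0$-form there, and it fills in a step the paper takes as given in its labeling conventions.
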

\begin{proof}
	We have seen in the previous discussion  that each green vertex contributes exactly a section in $\A_X^1\left(\widehat{\Sym}(T^*X)\otimes\wedge^{-1}T^*X\right)((\hbar))\subset\A_X^{\bullet}\left(\widehat{\Omega}_{TX}^{-\bullet}\right)((\hbar))$, while a yellow vertex does not contain components in $\A_X^\bullet$ and $\wedge^{-\bullet}T^*X$. This lemma then follows from the fact that only terms in $\A_X^{2n}\left(\widehat{\Sym}{T^*X}\otimes\wedge^{-2n}T^*X\right)((\hbar))\subset\A_X^{\bullet}\left(\widehat{\Omega}_{TX}^{-\bullet}\right)((\hbar))$ could contribute non-trivially to the BV integration and the integral over $X$. 
\end{proof}

 Suppose the weight of an admissible graph $\mathcal{G}$ contributes to $\hbar^l\cdot\alpha_l\cdot\left(\frac{\omega}{\hbar}\right)^n$ after the BV integration.  Let $p\geq 0$ be the sum of the genus of these $2n$ green vertices in this graph, and let $g$ denote the genus of the yellow vertex labeled by $d\theta_1\otimes O_f$. Then the power of $\hbar$ contributed by these vertices are $p-2n+g$ (there is the $-2n$ here since each green vertex is labeled by the correct component in $D(\gamma/\hbar)$). We have seen that such a graph must contain exactly $2n$ purple tails (fermions), and all the black half edges (bosons) must be contained in the set of internal edges (otherwise the boson output will be annihilated by the symbol map $\sigma$ in equation \eqref{equation: BV-integration}).  By comparing the power of $\hbar$ in $\hbar^l\cdot\alpha_l\cdot\left(\frac{\omega}{\hbar}\right)^n$ and the corresponding Feynman weights, we obtain the following equality:
 $$
 l-n=p-2n+g+|E(\mathcal{G})|+n.
 $$
 The term $|E(\mathcal{G})|$ (i.e., number of internal edges in $\mathcal{G}$) in the right hand side arises since the labeling of each internal edge by $\hbar\cdot\partial_P$ will contribute an $\hbar$. The right most term $n$ comes from the operator $(\hbar\cdot\iota_\Pi)^n$. It follows that 
 $$
 |E(\mathcal{G})|=l-p-g\leq l. 
 $$
 Since the valency of the unique yellow vertex (labeled by $d\theta_1\otimes O_f$) can not exceed $E(\mathcal{G})$, it is bounded above by $l$. Thus we can only label it with $d\theta_1\otimes (O_f)_m$ for $m\leq l$. In the proof of Proposition \ref{lemma: coefficients-flat-section} we have seen that the coefficients of $(O_f)_m$ at any $x_0\in X$ depends on the Taylor expansion of $f$ at $x_0$ of order at most $m$. This finishes the proof of Proposition \ref{proposition: integrand-locality}. 


\appendix

\bibliographystyle{amsplain}
\bibliography{References}

\begin{bibdiv}
\begin{biblist}

\bib{Beiser-Romer-Waldmann}{article}{
      author={Beiser, S.},
      author={R\"omer, H.},
      author={Waldmann, S.},
       title={Convergence of the {W}ick star product},
        date={2007},
        ISSN={0010-3616,1432-0916},
     journal={Comm. Math. Phys.},
      volume={272},
      number={1},
       pages={25\ndash 52},
}

\bib{Beiser-Waldmann}{article}{
      author={Beiser, S.},
      author={Waldmann, S.},
       title={Fr\'echet algebraic deformation quantization of the
  {P}oincar\'e{} disk},
        date={2014},
        ISSN={0075-4102,1435-5345},
     journal={J. Reine Angew. Math.},
      volume={688},
       pages={147\ndash 207},
}

\bib{ChaLeuLi2023}{article}{
      author={Chan, K.},
      author={Leung, N.C.},
      author={Li, Q.},
       title={Quantizable functions on {K}\"{a}hler manifolds and non-formal
  quantization},
        date={2023},
        ISSN={0001-8708},
     journal={Adv. Math.},
      volume={433},
       pages={Paper No. 109293},
}

\bib{Costello-book}{book}{
      author={Costello, K.},
       title={Renormalization and effective field theory},
      series={Mathematical Surveys and Monographs},
   publisher={American Mathematical Society, Providence, RI},
        date={2011},
      volume={170},
        ISBN={978-0-8218-5288-0},
}

\bib{Fed1994}{article}{
      author={Fedosov, B.~V.},
       title={A simple geometrical construction of deformation quantization},
        date={1994},
        ISSN={0022-040X},
     journal={J. Differential Geom.},
      volume={40},
      number={2},
       pages={213\ndash 238},
}

\bib{Fed1996}{book}{
      author={Fedosov, B.~V.},
       title={Deformation quantization and index theory},
      series={Mathematical Topics},
   publisher={Akademie Verlag, Berlin},
        date={1996},
      volume={9},
        ISBN={3-05-501716-1},
}

\bib{Fed2000}{article}{
      author={Fedosov, B.~V.},
       title={The {A}tiyah-{B}ott-{P}atodi method in deformation quantization},
        date={2000},
        ISSN={0010-3616},
     journal={Comm. Math. Phys.},
      volume={209},
      number={3},
       pages={691\ndash 728},
}

\bib{FFS1995}{article}{
      author={Feigin, B.},
      author={Felder, G.},
      author={Shoikhet, B.},
       title={Hochschild cohomology of the {W}eyl algebra and traces in
  deformation quantization},
        date={2005},
        ISSN={0012-7094,1547-7398},
     journal={Duke Math. J.},
      volume={127},
      number={3},
       pages={487\ndash 517},
}

\bib{GraLiLi2017}{article}{
      author={Grady, R.~E.},
      author={Li, Q.},
      author={Li, S.},
       title={Batalin-{V}ilkovisky quantization and the algebraic index},
        date={2017},
        ISSN={0001-8708},
     journal={Adv. Math.},
      volume={317},
       pages={575\ndash 639},
}

\bib{Nest-Tsygan-1995}{article}{
      author={Nest, R.},
      author={Tsygan, B.},
       title={Algebraic index theorem},
        date={1995},
        ISSN={0010-3616,1432-0916},
     journal={Comm. Math. Phys.},
      volume={172},
      number={2},
       pages={223\ndash 262},
}

\bib{OMMY}{incollection}{
      author={Omori, H.},
      author={Maeda, Y.},
      author={Miyazaki, N.},
      author={Yoshioka, A.},
       title={Deformation quantization of {F}r\'echet-{P}oisson algebras:
  convergence of the {M}oyal product},
        date={2000},
   booktitle={Conf\'erence {M}osh\'e{} {F}lato 1999, {V}ol. {II} ({D}ijon)},
      series={Math. Phys. Stud.},
      volume={22},
   publisher={Kluwer Acad. Publ., Dordrecht},
       pages={233\ndash 245},
}

\bib{Rieffel1989}{article}{
      author={Rieffel, M.A.},
       title={Deformation quantization of {H}eisenberg manifolds},
        date={1989},
        ISSN={0010-3616,1432-0916},
     journal={Comm. Math. Phys.},
      volume={122},
      number={4},
       pages={531\ndash 562},
}

\bib{VV2015}{article}{
      author={Veretennikov, A.~Yu.},
      author={Veretennikova, E.~V.},
       title={On partial derivatives of multivariant {B}ernstein polynomials},
        date={2016},
     journal={Siberian Adv. Math.},
      volume={26},
      number={4},
       pages={294\ndash 305},
}

\end{biblist}
\end{bibdiv}

\end{document}